\newtheorem{theorem}{Theorem}
\newtheorem{lemma}[theorem]{Lemma}
\theoremstyle{definition}
\definecolor{darkgreen}{RGB}{0,180,0}
\colorlet[named]{green}{darkgreen}
\newcommand{\End}{\operatorname{End}}
\newcommand{\gammaice}[4]{\begin{tikzpicture}
\coordinate (a) at (-.75, 0);
\coordinate (b) at (0, .75);
\coordinate (c) at (.75, 0);
\coordinate (d) at (0, -.75);
\coordinate (aa) at (-.75,.5);
\coordinate (cc) at (.75,.5);
\draw (a)--(c);
\draw (b)--(d);
\draw[fill=white] (a) circle (.25);
\draw[fill=white] (b) circle (.25);
\draw[fill=white] (c) circle (.25);
\draw[fill=white] (d) circle (.25);
\node at (0,1) { };
\node at (a) {$#1$};
\node at (b) {$#2$};
\node at (c) {$#3$};
\node at (d) {$#4$};
\path[fill=white] (0,0) circle (.2);
\node at (0,0) {$\bullet$};
\end{tikzpicture}}
\newcommand{\gammagamma}[4]{
\begin{tikzpicture}[scale=0.7]
\draw (0,0) to [out = 0, in = 180] (2,2);
\draw (0,2) to [out = 0, in = 180] (2,0);
\draw[fill=white] (0,0) circle (.35);
\draw[fill=white] (0,2) circle (.35);
\draw[fill=white] (2,0) circle (.35);
\draw[fill=white] (2,2) circle (.35);
\node at (0,0) {$#1$};
\node at (0,2) {$#2$};
\node at (2,2) {$#3$};
\node at (2,0) {$#4$};
\path[fill=white] (1,1) circle (.3);
\node at (1,1) {$\bullet$};
\end{tikzpicture}
}
\title{Lattice Models, Differential Forms, and the Yang-Baxter Equation}
\date{December 22, 2020}
\author{Kedar Karhadkar}
\begin{document}
\maketitle
\begin{abstract}
    We introduce new methods to describe admissible states of the six-vertex and the eight-vertex lattice models of statistical mechanics. For the six-vertex model, we view the admissible states as differential forms on a grid graph. This yields a new proof of the correspondence between admissible states and 3-colorings of a rectangular grid. For the eight-vertex model, we interpret the set of admissible states as an $\mathbb{F}_2$-vector space. This viewpoint lets us enumerate the set of admissible states.  Finally, we find necessary conditions for a Yang-Baxter equation to hold for the general eight-vertex model.
\end{abstract}
\section{Introduction}
Lattice models are objects from statistical mechanics which have also seen a number of surprising applications in mathematics. These models are built with a graph (often a rectangular grid) whose edges are labeled, and locally satisfy some property. If this property is satisfied, we call the state \emph{admissible}. Each vertex of the graph is assigned a weight depending on the labels of the edges around it. The goal is to make conclusions about the global behavior of the graph. One way of studying this is by determining the \emph{partition function} of the lattice model, which is calculated by taking a product of the vertex weights, and summing over all possible states:
\[Z(\mathfrak{G}) = \sum_{\text{states}}\prod_{v\in V} \text{wt}(v). \]
Baxter \cite{Baxter} studied various models in this way. An important method that he used was repeated application of the \emph{Yang-Baxter equation}, from which he deduced certain symmetry properties of partition functions. These symmetries are the source of several connections to representation theory. For instance, if we assign the vertices a particular set of weights, then the partition function we obtain is a Schur function \cite{BBF}. This allows one to prove symmetric function identities using lattice models, such as the dual Cauchy identity \cite{DualCauchy} and the Weyl character formula \cite{WeylCharacter}.

We will primarily focus our attention on the six-vertex and the eight-vertex square lattice models. The six-vertex model was famously used by Kuperberg \cite{Kuperberg} to prove the alternating sign matrix conjecture, originally proven by Zeilberger \cite{Zeilberger}. Among the first steps of the proof was a correspondence between admissible lattice states of the six vertex model and alternating sign matrices. In this paper, we will find combinatorial interpretations of lattice states of a similar flavor.

The primary technique that we will use to study the states of lattice models will be the use of discrete differential forms on a rectangular grid. Discrete differential calculus has been studied recently over arbitrary graphs; for a more general description of it, see \cite{DifferentialForms}. Our methods rely on the observation that for certain edge values $f_{i,j}$ and $g_{i,j}$, we have
\[g_{i+1,j} - g_{i,j} = f_{i,j+1} - f_{i,j}.\]
We rewrite this as
\[D_xg = D_yf,\]
where $D_x$ and $D_y$ are \emph{discrete partial derivatives}. Drawing an analogy to the continuous case, we say that the 1-form $fdx + gdy$ is \emph{closed}. Since rectangular grids are discrete analogues of open balls in $\mathbb{R}^2$, we see that closed 1-forms on the rectangular grid are exact. This gives us a new proof that there are three times as many 3-colorings of a rectangular grid as there are admissible lattice states in the six-vertex model, a fact proven by Lenard \cite{3color}.

Our viewpoint becomes more fruitful when we look at more complicated lattices. Specifically, we study the six-vertex model with toroidal boundary conditions. This is a discrete analogue of a torus, so we expect it to have nontrivial cohomology. By computing this cohomology, we are able to find a new combinatorial interpretation of the admissible states with toroidal boundary conditions. In particular, we see that the number of admissible states are not simply in correspondence with 3-colorings of a toroidal grid graph.

After studying the six-vertex model, we turn our attention to admissible states of the eight-vertex model. Although we could use our previous differential forms strategy, we find that the structure of the admissible states is even simpler. In particular, there is a natural description of the admissible states as a vector space over $\mathbb{F}_2$. This lets us explicitly determine the number of admissible states in the eight-vertex model. Moreover, we can determine this number for any given set of boundary conditions, and it is essentially independent of the boundary.

Finally, we turn our attention back to the Yang-Baxter equation in the eight-vertex model. Partition functions for the eight-vertex model have been studied by Fan and Wu \cite{FanWu} and Galleas and Martins \cite{GalleasMartins}. In their analysis, they make the assumption that for certain parameters $c_1, c_{-1}, d_1, d_{-1}$, we have $c_1 = c_{-1}$ and $d_1 = d_{-1}$. We prove our results while relaxing these assumptions, so we are able to generalize their previous work. We determine conditions the Boltzmann weights of the lattice that are required in order to find a solution to the Yang-Baxter equation.
\section{Admissible States of the Six-Vertex Model}
Here we will quickly define the six-vertex lattice model. Let us consider a rectangular grid in which the edges are labeled as either 0 or 1. We will refer to such a labeling of the edges as a \emph{state}.

\begin{figure}[h]
\[
  \scalebox{.95}{\begin{tikzpicture}
    \draw [line width=0.45mm] (1,0)--(1,6);
    \draw [line width=0.45mm] (3,0)--(3,6);
    \draw [line width=0.45mm] (5,0)--(5,6);
    \draw [line width=0.45mm] (7,0)--(7,6);
    \draw [line width=0.45mm] (9,0)--(9,6);
    \draw [line width=0.45mm] (0,1)--(10,1);
    \draw [line width=0.45mm] (0,3)--(10,3);
    \draw [line width=0.45mm] (0,5)--(10,5);
    \draw[line width=0.45mm, fill=white] (1,6) circle (.35);
    \draw[line width=0.45mm, fill=white] (3,6) circle (.35);
    \draw[line width=0.45mm, fill=white] (5,6) circle (.35);
    \draw[line width=0.45mm, fill=white] (7,6) circle (.35);
    \draw[line width=0.45mm, fill=white] (9,6) circle (.35);
    \draw[line width=0.45mm, fill=white] (1,4) circle (.35);
    \draw[line width=0.45mm, fill=white] (3,4) circle (.35);
    \draw[line width=0.45mm, fill=white] (5,4) circle (.35);
    \draw[line width=0.45mm, fill=white] (7,4) circle (.35);
    \draw[line width=0.45mm, fill=white] (9,4) circle (.35);
    \draw[line width=0.45mm, fill=white] (1,2) circle (.35);
    \draw[line width=0.45mm, fill=white] (3,2) circle (.35);
    \draw[line width=0.45mm, fill=white] (5,2) circle (.35);
    \draw[line width=0.45mm, fill=white] (7,2) circle (.35);
    \draw[line width=0.45mm, fill=white] (9,2) circle (.35);
    \draw[line width=0.45mm, fill=white] (1,0) circle (.35);
    \draw[line width=0.45mm, fill=white] (3,0) circle (.35);
    \draw[line width=0.45mm, fill=white] (5,0) circle (.35);
    \draw[line width=0.45mm, fill=white] (7,0) circle (.35);
    \draw[line width=0.45mm, fill=white] (9,0) circle (.35);
    \draw[line width=0.45mm, fill=white] (0,5) circle (.35);
    \draw[line width=0.45mm, fill=white] (2,5) circle (.35);
    \draw[line width=0.45mm, fill=white] (4,5) circle (.35);
    \draw[line width=0.45mm, fill=white] (6,5) circle (.35);
    \draw[line width=0.45mm, fill=white] (8,5) circle (.35);
    \draw[line width=0.45mm, fill=white] (10,5) circle (.35);
    \draw[line width=0.45mm, fill=white] (0,3) circle (.35);
    \draw[line width=0.45mm, fill=white] (2,3) circle (.35);
    \draw[line width=0.45mm, fill=white] (4,3) circle (.35);
    \draw[line width=0.45mm, fill=white] (6,3) circle (.35);
    \draw[line width=0.45mm, fill=white] (8,3) circle (.35);
    \draw[line width=0.45mm, fill=white] (10,3) circle (.35);
    \draw[line width=0.45mm, fill=white] (0,1) circle (.35);
    \draw[line width=0.45mm, fill=white] (2,1) circle (.35);
    \draw[line width=0.45mm, fill=white] (4,1) circle (.35);
    \draw[line width=0.45mm, fill=white] (6,1) circle (.35);
    \draw[line width=0.45mm, fill=white] (8,1) circle (.35);
    \draw[line width=0.45mm, fill=white] (10,1) circle (.35);

    \path[fill=white] (1,1) circle (.2);
    \node at (1,1) {$\bullet$};
    \path[fill=white] (3,1) circle (.2);
    \node at (3,1) {$\bullet$};
    \path[fill=white] (5,1) circle (.2);
    \node at (5,1) {$\bullet$};
    \path[fill=white] (7,1) circle (.2);
    \node at (7,1) {$\bullet$};
    \path[fill=white] (9,1) circle (.2);
    \node at (9,1) {$\bullet$};

    \path[fill=white] (1,3) circle (.2);
    \node at (1,3) {$\bullet$};
    \path[fill=white] (3,3) circle (.2);
    \node at (3,3) {$\bullet$};
    \path[fill=white] (5,3) circle (.2);
    \node at (5,3) {$\bullet$};
    \path[fill=white] (7,3) circle (.2);
    \node at (7,3) {$\bullet$};
    \path[fill=white] (9,3) circle (.2);
    \node at (9,3) {$\bullet$};

    \path[fill=white] (1,5) circle (.2);
    \node at (1,5) {$\bullet$};
    \path[fill=white] (3,5) circle (.2);
    \node at (3,5) {$\bullet$};
    \path[fill=white] (5,5) circle (.2);
    \node at (5,5) {$\bullet$};
    \path[fill=white] (7,5) circle (.2);
    \node at (7,5) {$\bullet$};
    \path[fill=white] (9,5) circle (.2);
    \node at (9,5) {$\bullet$};

    \node at (1,6) {$1$};
    \node at (3,6) {$0$};
    \node at (5,6) {$1$};
    \node at (7,6) {$0$};
    \node at (9,6) {$1$};
    \node at (1,4) {$0$};
    \node at (3,4) {$0$};
    \node at (5,4) {$1$};
    \node at (7,4) {$1$};
    \node at (9,4) {$0$};
    \node at (1,2) {$0$};
    \node at (3,2) {$0$};
    \node at (5,2) {$0$};
    \node at (7,2) {$1$};
    \node at (9,2) {$0$};
    \node at (1,0) {$0$};
    \node at (3,0) {$0$};
    \node at (5,0) {$0$};
    \node at (7,0) {$0$};
    \node at (9,0) {$0$};
    \node at (0,5) {$0$};
    \node at (2,5) {$1$};
    \node at (4,5) {$1$};
    \node at (6,5) {$1$};
    \node at (8,5) {$0$};
    \node at (10,5) {$1$};
    \node at (0,3) {$0$};
    \node at (2,3) {$0$};
    \node at (4,3) {$0$};
    \node at (6,3) {$1$};
    \node at (8,3) {$1$};
    \node at (10,3) {$1$};
    \node at (0,1) {$0$};
    \node at (2,1) {$0$};
    \node at (4,1) {$0$};
    \node at (6,1) {$0$};
    \node at (8,1) {$1$};
    \node at (10,1) {$1$};
    \node at (1.00,6.8) {$ 1$};
    \node at (3.00,6.8) {$ 2$};
    \node at (5.00,6.8) {$ 3$};
    \node at (7.00,6.8) {$ 4$};
    \node at (9.00,6.8) {$ 5$};

    \node at (-.75,1) {$ 1$};
    \node at (-.75,3) {$ 2$};
    \node at (-.75,5) {$ 3$};
  \end{tikzpicture}}
\]
\caption{A state of a rectangular lattice.}
\end{figure}
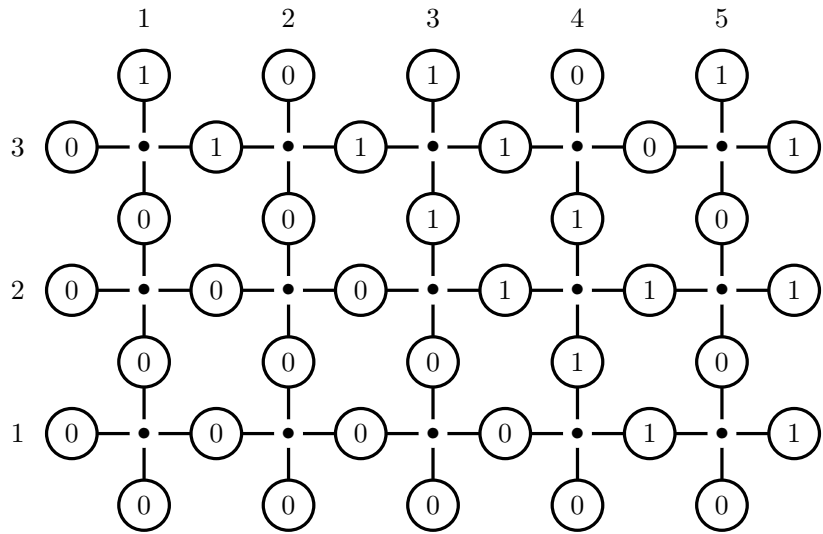
We are particularly interested in the states in which the edges around a vertex are labeled in the following way.

\begin{figure}[H]
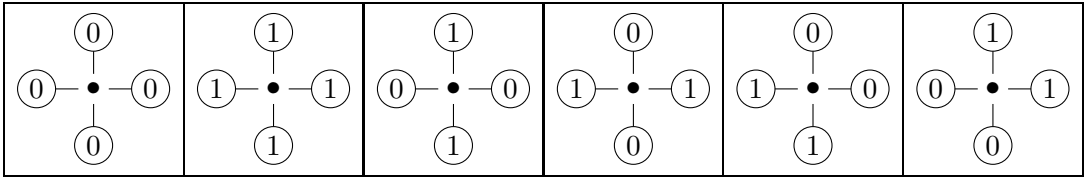

\[
\begin{array}{|c|c|c|c|c|c|}
\hline
  \gammaice{0}{0}{0}{0} &
  \gammaice{1}{1}{1}{1} &
  \gammaice{0}{1}{0}{1} &
  \gammaice{1}{0}{1}{0} &
  \gammaice{1}{0}{0}{1} &
  \gammaice{0}{1}{1}{0}\\
\hline\end{array}\]
\caption{Admissible labelings in the six-vertex model.}
\label{uncoloredbw}
\end{figure}

If all of the vertices of a state are in one of the six configurations of Figure 2, then we call the state \emph{admissible}. For instance, the state in Figure 1 is admissible.

For the rest of this section, as well as sections 3 and 4, fix $m, n \geq 2$. For a rectangular lattice with $m$ columns and $n$ rows of interior vertices, let $v_{i,j}$ denote the vertex that is in the $i$-th column from the left and the $j$-th row from the bottom. Let $g_{i,j}$ be the label of the horizontal edge which is $i$-th from the left and $j$-th from the bottom. Similarly, let $f_{i,j}$ be the label of the vertical edge which is $i$-th from the left and $j$-th from the bottom. For $1 \leq i \leq m$ and $1 \leq j \leq n$, we call the entries $g_{1,j}, g_{m+1,j}, f_{i,1}, f_{i,n+1}$ the \emph{boundary values} of the lattice. At any vertex $v_{i,j}$, we have the following labeling.
\begin{figure}[H]
\[\begin{tikzpicture}[scale=2]
\coordinate (a) at (-.75, 0);
\coordinate (b) at (0, .75);
\coordinate (c) at (.75, 0);
\coordinate (d) at (0, -.75);
\coordinate (aa) at (-.75,.5);
\coordinate (cc) at (.75,.5);
\draw (a)--(c);
\draw (b)--(d);
\draw[fill=white] (a) circle (.25);
\draw[fill=white] (b) circle (.25);
\draw[fill=white] (c) circle (.25);
\draw[fill=white] (d) circle (.25);
\node at (0,1) { };
\node at (a) {$g_{i,j}$};
\node at (b) {$f_{i,j+1}$};
\node at (c) {$g_{i+1,j}$};
\node at (d) {$f_{i,j}$};
\path[fill=white] (0,0) circle (.2);
\node at (0,0) {\Huge $\bullet$};
\end{tikzpicture}\]
\end{figure}
It is easy to verify that this labeling is admissible if and only if
\begin{align}
g_{i+1,j} - g_{i,j} \equiv f_{i,j+1} - f_{i,j} \pmod 3. \label{admissible-condition}
\end{align}
In light of this, we will view $f$ and $g$ as functions from $[m] \times [n + 1]$ to $\mathbb{F}_3$ and $[m + 1] \times [n]$ to $\mathbb{F}_3$ respectively, where $[n]$ denotes the set $\{1,2,\cdots,n\}$.

 Let $h: [m+1] \times [n+1] \to \mathbb{F}_3$ be a function. We define its \emph{discrete partial derivatives} $D_xh: [m] \times [n+1] \to \mathbb{F}_3$ and $D_yh: [m+1] \times [n] \to \mathbb{F}_3$ by
\[(D_x h)_{i,j} = h_{i+1,j} - h_{i,j} \]
and
\[(D_yh)_{i,j} = h_{i,j+1} - h_{i,j}. \]
We define a (discrete) \emph{1-form} to be a formal expression
\[fdx + gdy,\]
where $f: [m] \times [n+1] \to \mathbb{F}_3$ and $g: [m+1] \times [n] \to \mathbb{F}_3$ are functions. If $h: [m+1] \times [n+1] \to \mathbb{F}_3$ is a function, we define its \emph{exterior derivative} $dh$ by
\[dh = (D_xh)dx + (D_yh)dy. \]
We say that a 1-form $fdx + gdy$ is \emph{closed} if
\[D_y f = D_x g, \]
and \emph{exact} if $fdx + gdy = dh$ for some function $h$. It is easy to see that every exact 1-form is closed. In fact, by the following discrete version of the Poincar\'{e} lemma, the converse is also true.
\begin{lemma}\label{poincare}
Let $\alpha = fdx + gdy$ be a closed 1-form. Then $\alpha$ is exact.
\end{lemma}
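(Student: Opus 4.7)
The plan is to mimic the proof of the continuous Poincar\'e lemma, constructing a primitive $h$ by ``integrating'' the 1-form along a path from a base point. Since $[m+1]\times[n+1]$ is the discrete analogue of a convex (hence simply connected) region, a single L-shaped path based at $(1,1)$ should suffice, and we will not need induction on $m$ or $n$.

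Explicitly, I would set $h_{1,1}=0$ and define
\[h_{i,j} = \sum_{k=1}^{i-1} f_{k,1} + \sum_{\ell=1}^{j-1} g_{i,\ell},\]
with the convention that empty sums vanish. The idea is that this integrates $f$ along the bottom row from $(1,1)$ to $(i,1)$, then integrates $g$ up the $i$-th column to $(i,j)$. Both sums make sense for $(i,j)\in[m+1]\times[n+1]$ because $f_{k,1}$ is defined for $k\in[m]$ and $g_{i,\ell}$ is defined for $i\in[m+1]$, $\ell\in[n]$.

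The verification then splits into two telescoping calculations. The identity $(D_yh)_{i,j}=g_{i,j}$ is immediate: the first (horizontal) sum is the same for $h_{i,j+1}$ and $h_{i,j}$, and the second sum telescopes directly. The more interesting identity $(D_xh)_{i,j}=f_{i,j}$ is where the closedness hypothesis is used: computing $h_{i+1,j}-h_{i,j}$ yields
\[f_{i,1} \;+\; \sum_{\ell=1}^{j-1}\bigl(g_{i+1,\ell}-g_{i,\ell}\bigr) \;=\; f_{i,1} + \sum_{\ell=1}^{j-1}(D_xg)_{i,\ell},\]
and applying $D_xg=D_yf$ turns this into $f_{i,1}+\sum_{\ell=1}^{j-1}(f_{i,\ell+1}-f_{i,\ell})$, which telescopes to $f_{i,j}$.

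The main (mild) obstacle is bookkeeping: $f$ and $g$ live on staggered lattices of different shapes, so one must be careful that every index appearing in the definition and the computation falls inside the intended domain. Once this is checked, the proof is a single application of the closedness condition followed by two telescoping sums, with no issues of path-dependence to worry about in this contractible setting.
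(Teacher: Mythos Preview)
Your proposal is correct and is essentially identical to the paper's own proof: the paper defines the very same primitive $h_{i,j} = \sum_{a=1}^{i-1} f_{a,1} + \sum_{b=1}^{j-1} g_{i,b}$ and verifies $D_yh=g$ and $D_xh=f$ by the same two telescoping computations, invoking $D_xg=D_yf$ in the second one.
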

\begin{proof}
Define $h: [m+1] \times [n+1] \to \mathbb{F}_3$ by
    \[h_{i,j} = \sum_{a = 1}^{i-1} f_{a,1} + \sum_{b = 1}^{j-1} g_{i,b}.\]
Then,
\begin{align*}
    (D_yh)_{i,j} &= \left(\sum_{a = 1}^{i-1} f_{a,1} + \sum_{b = 1}^{j} g_{i,b}\right) - \left(\sum_{a = 1}^{i} f_{a,1} + \sum_{b = 1}^{j-1} g_{i,b}\right)\\
    &= g_{i,j}
\end{align*}
and
\begin{align*}
    (D_xh)_{i,j} &= \left(\sum_{a = 1}^{i} f_{a,1} + \sum_{b = 1}^{j-1} g_{i+1,b}\right) - \left(\sum_{a = 1}^{i-1} f_{a,1} + \sum_{b = 1}^{j-1} g_{i,b}\right) \\
    &= f_{i,1} + \sum_{b = 1}^{j - 1} (D_xg)_{i,b}\\
    &= f_{i,1} + \sum_{b = 1}^{j - 1} (D_y f)_{i,b}\\
    &= f_{i,j}.
\end{align*}
So $dh = \alpha$, and $\alpha$ is exact.
\end{proof}
Call a 1-form $fdx + gdy$ \emph{admissible} if for all $i$ and $j$, $f_{i,j}$ and $g_{i,j}$ are not equal to 2. Using this language, we can describe admissible states in terms of differential forms.
\begin{lemma}
There is a one-to-one correspondence between admissible states of the six-vertex model and admissible closed 1-forms.
\end{lemma}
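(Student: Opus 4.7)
The plan is to verify that the correspondence is essentially a matter of unpacking definitions, since the key computation has already been carried out in establishing equation \eqref{admissible-condition}.

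First, I would construct the map from admissible states to admissible closed 1-forms. Given an admissible state, read off the edge labels to define functions $f\colon [m]\times[n+1]\to\mathbb{F}_3$ (the vertical edge labels) and $g\colon [m+1]\times[n]\to\mathbb{F}_3$ (the horizontal edge labels). Since every edge label of a state is $0$ or $1$, the resulting 1-form $f\,dx + g\,dy$ is automatically admissible in the sense of not taking the value $2$. Next, closedness $(D_yf)_{i,j}=(D_xg)_{i,j}$ is, by definition of the discrete partial derivatives, the congruence $f_{i,j+1}-f_{i,j}\equiv g_{i+1,j}-g_{i,j}\pmod 3$ at every interior vertex $v_{i,j}$. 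This is exactly the condition \eqref{admissible-condition}, which the excerpt has already noted is equivalent to each vertex being labeled according to one of the six configurations in Figure~\ref{uncoloredbw}. Hence admissibility of the state forces closedness of the associated 1-form.

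In the reverse direction, I would start from an admissible closed 1-form $f\,dx+g\,dy$ and build a state by labeling each vertical (resp.\ horizontal) edge with the corresponding value of $f$ (resp.\ $g$). Because the form is admissible, none of these labels is $2$, so they lie in $\{0,1\}$ and define a genuine state. Closedness then gives the local condition \eqref{admissible-condition} at every interior vertex, which forces each vertex to take one of the six admissible configurations, so the state is admissible.

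Finally, I would observe that these two assignments are mutual inverses: both are given by reading off the edge labels as values of $f$ and $g$ and vice versa, with no choices made. Consequently the map is a bijection between admissible states of the six-vertex model and admissible closed 1-forms. The main obstacle one might have expected is checking that the six local configurations of Figure~\ref{uncoloredbw} are captured by a single mod-$3$ equation, but this has already been done just before the statement of the lemma, so the proof reduces to matching definitions.
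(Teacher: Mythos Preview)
Your proposal is correct and follows essentially the same approach as the paper: both use the edge-label data $(f,g)$ directly as the components of the 1-form, and both rely on equation~\eqref{admissible-condition} to match admissibility of the state with closedness of the form. Your write-up is simply more explicit, spelling out the inverse map and the mutual-inverse check that the paper leaves to the reader.
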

\begin{proof}
Consider an admissible state $A$ of a lattice with with $m$ columns and $n$ rows. Let $g_{i,j}$ and $f_{i,j}$ be the entries of the horizontal and vertical edges as above. Then for all $i$ and $j$, $g_{i,j}$ and $f_{i,j}$ are not equal to 2. Equation (\ref{admissible-condition}) holds exactly when
\[D_y f = D_xg,\]
so we associate $A$ with the admissible closed 1-form $fdx + gdy$.
\end{proof}
As an application of the above correspondence, we will describe admissible states in terms of colorings of a rectangular grid. For the original proof of this result, see \cite{3color}. Recall that a $k$-coloring of a graph is an assignment of values from the set $\{0,1,\cdots,k-1\}$ to each vertex of the graph, such that two adjacent vertices are not assigned the same value.
\begin{theorem}
There are three times as many 3-colorings of a rectangular grid with $m+1$ columns and $n+1$ rows as there are admissible states of the six-vertex model.
\end{theorem}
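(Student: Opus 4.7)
The plan is to chain together two correspondences. Lemma~2 already identifies admissible states of the six-vertex model with admissible closed 1-forms, so it suffices to build a surjective three-to-one map from proper 3-colorings of the $(m+1)\times(n+1)$ grid to admissible closed 1-forms.

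The key device is a fixed ``reference'' 3-coloring whose exterior derivative is constant. The natural choice is $h_0\colon [m+1]\times[n+1]\to \mathbb{F}_3$ defined by $h_0(i,j)=(i+j)\bmod 3$, for which $D_xh_0\equiv 1$ and $D_yh_0\equiv 1$. Given an arbitrary proper 3-coloring $h$, the differences $D_xh$ and $D_yh$ take values in $\mathbb{F}_3\setminus\{0\}=\{1,2\}$, so I would define
\[\alpha_h := d(h-h_0) = (D_xh-1)\,dx+(D_yh-1)\,dy.\]
Each coefficient lies in $\{0,1\}$, and the form is exact by construction, so $\alpha_h$ is admissible and closed.

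For surjectivity I would apply the Poincar\'e lemma (Lemma~1) to an arbitrary admissible closed 1-form $\alpha$: write $\alpha = dk$ for some $k\colon[m+1]\times[n+1]\to\mathbb{F}_3$, and set $h := k+h_0$. Since $D_xk$ and $D_yk$ take values in $\{0,1\}$, the sums $D_xh = D_xk+1$ and $D_yh = D_yk+1$ take values in $\{1,2\}$, so $h$ is a proper 3-coloring, and a direct check shows $\alpha_h=\alpha$.

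For the fiber count, $\alpha_h=\alpha_{h'}$ iff $d(h-h')=0$, which by the definition of the discrete derivatives forces $h-h'$ to be constant on the (connected) rectangular grid, giving exactly three preimages (one for each element of $\mathbb{F}_3$). Composing the resulting three-to-one map with the bijection of Lemma~2 yields the theorem. The only genuinely nonroutine step is the introduction of $h_0$: proper 3-colorings have differences in $\{1,2\}$ while admissibility requires edge labels in $\{0,1\}$, and the shift by $h_0$ is precisely what reconciles the two conventions.
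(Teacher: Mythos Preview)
Your proposal is correct and is essentially the paper's own argument: the paper defines the map $G(h)=(dh-dx-dy,\,h_{1,1})$ from 3-colorings to (admissible closed 1-forms)$\times\mathbb{F}_3$, and your $\alpha_h=d(h-h_0)$ is exactly the first component $dh-dx-dy$, with the fiber-count over constants replacing the explicit $\mathbb{F}_3$ coordinate $h_{1,1}$. The reference coloring $h_0(i,j)=i+j$ is precisely the function whose differential is $dx+dy$ in the paper's notation, so the two proofs differ only in packaging (3-to-1 surjection versus explicit bijection with $\mathcal{S}_1\times\mathbb{F}_3$).
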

\begin{proof}
Let $\mathcal{S}_1$ denote the set of admissible closed 1-forms, and let $\mathcal{S}_2$ denote the set of functions $h: [m+1] \times [n+1] \to \mathbb{F}_3$ such that for all $i$ and $j$, $h_{i+1,j} \neq h_{i,j}$ and $h_{i,j+1} \neq h_{i,j}$. In other words, $\mathcal{S}_2$ is the set of functions $h$ such that $D_xh$ and $D_yh$ are nonzero everywhere. It is easy to see that $\mathcal{S}_2$ is in bijection with the set of 3-colorings of the rectangular grid. To prove the theorem, then, it suffices to find a bijection between $\mathcal{S}_1 \times \mathbb{F}_3$ and $\mathcal{S}_2$. By Lemma \ref{poincare}, every element of $\mathcal{S}_1$ is exact, so we can write every element of $\mathcal{S}_1$ in the form $dh$, where $h:[m+1]\times[n+1]\to\mathbb{F}_3$ is a function. Now define $F: \mathcal{S}_1 \times \mathbb{F}_3\to \mathcal{S}_2$ by
\[F(dh, t)_{i,j} = h_{i,j} - h_{1,1} + t + i + j - 2. \]
Note that $F$ is well-defined, since if $dh = dh'$, then $h - h'$ is constant, and $h_{i,j} - h_{1,1} = h'_{i,j} - h'_{1,1}$ for all $i,j$. To see that $F$ does indeed map $\mathcal{S}_1 \times \mathbb{F}_3$ into $\mathcal{S}_2$, observe that
\begin{align*}
    (D_x F(dh,t))dx + (D_y F(dh,t))dy &= dF(dh,t)\\
    &= dh + dx + dy.
\end{align*}
If we write $dh= fdx + gdy$, then $f$ and $g$ are nowhere equal to 2, since $dh$ is admissible. So, comparing the first and last expressions in the equation above, we see that $D_xF(dh,t)$ and $D_yF(dh,t)$ are nowhere 0. Thus, $F$ is a well-defined map into $\mathcal{S}_2$. Next, we define $G: \mathcal{S}_2 \to \mathcal{S}_1 \times \mathbb{F}_3$ by
\[G(h) = (dh - dx - dy, h_{1,1}). \]
To see that $G$ maps $\mathcal{S}_2$ into $\mathcal{S}_1 \times \mathbb{F}_3$, observe that if we write $dh = fdx + gdy$, then $f$ and $g$ are nowhere zero, so the components of $dh - dx - dy$ are nowhere equal to 2. Now let $x,y:[m+1]\times[n+1]\to\mathbb{F}_3$ be functions defined by
\begin{align*}
    x_{i,j} &= i\\
    y_{i,j} &= j.
\end{align*}
Then we see that
\begin{align*}
    (F \circ G)(h)_{i,j} &= F(d(h - x - y),h_{1,1})_{i,j}\\
    &= (h_{i,j} - x_{i,j} - y_{i,j}) - (h_{1,1} - x_{1,1} - y_{1,1}) + h_{1,1} +  i + j - 2 \\
    &= h_{i,j}
\end{align*}
and
\begin{align*}
    (G \circ F)(dh,t) &= (dF(dh,t) - dx - dy, F(dh,t)_{1,1}) 
    \\&= ((dh + dx + dy) - dx - dy, t)\\
    &= (dh, t).
\end{align*}
Thus, $F$ and $G$ are inverses of each other, and $\mathcal{S}_1 \times \mathbb{F}_3$ and $\mathcal{S}_2$ are in bijection.
\end{proof}
\section{Toroidal Boundary Conditions}
We will now apply the methods used in the previous section to lattices with toroidal boundary conditions. In this section, we will make the additional assumption that $m$ and $n$ are not divisible by 3. Let $A$ be an admissible state of the six-vertex model, with vertical entries $f_{i,j}$ and horizontal entries $g_{i,j}$ as previously. We say that $A$ has \emph{toroidal boundary conditions} if its boundary values satisfy
\begin{align*}
    g_{1,j} &= g_{m+1,j}\\
    f_{i,1} &= f_{i,n+1}
\end{align*}
for all $1 \leq i \leq m$ and $1 \leq j \leq n$.
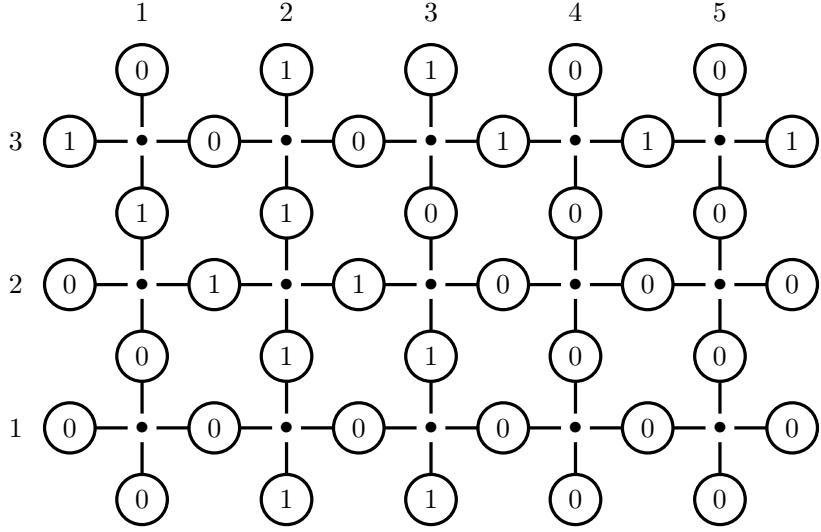
\begin{figure}[h]
\[
  \scalebox{.95}{\begin{tikzpicture}
    \draw [line width=0.45mm] (1,0)--(1,6);
    \draw [line width=0.45mm] (3,0)--(3,6);
    \draw [line width=0.45mm] (5,0)--(5,6);
    \draw [line width=0.45mm] (7,0)--(7,6);
    \draw [line width=0.45mm] (9,0)--(9,6);
    \draw [line width=0.45mm] (0,1)--(10,1);
    \draw [line width=0.45mm] (0,3)--(10,3);
    \draw [line width=0.45mm] (0,5)--(10,5);
    \draw[line width=0.45mm, fill=white] (1,6) circle (.35);
    \draw[line width=0.45mm, fill=white] (3,6) circle (.35);
    \draw[line width=0.45mm, fill=white] (5,6) circle (.35);
    \draw[line width=0.45mm, fill=white] (7,6) circle (.35);
    \draw[line width=0.45mm, fill=white] (9,6) circle (.35);
    \draw[line width=0.45mm, fill=white] (1,4) circle (.35);
    \draw[line width=0.45mm, fill=white] (3,4) circle (.35);
    \draw[line width=0.45mm, fill=white] (5,4) circle (.35);
    \draw[line width=0.45mm, fill=white] (7,4) circle (.35);
    \draw[line width=0.45mm, fill=white] (9,4) circle (.35);
    \draw[line width=0.45mm, fill=white] (1,2) circle (.35);
    \draw[line width=0.45mm, fill=white] (3,2) circle (.35);
    \draw[line width=0.45mm, fill=white] (5,2) circle (.35);
    \draw[line width=0.45mm, fill=white] (7,2) circle (.35);
    \draw[line width=0.45mm, fill=white] (9,2) circle (.35);
    \draw[line width=0.45mm, fill=white] (1,0) circle (.35);
    \draw[line width=0.45mm, fill=white] (3,0) circle (.35);
    \draw[line width=0.45mm, fill=white] (5,0) circle (.35);
    \draw[line width=0.45mm, fill=white] (7,0) circle (.35);
    \draw[line width=0.45mm, fill=white] (9,0) circle (.35);
    \draw[line width=0.45mm, fill=white] (0,5) circle (.35);
    \draw[line width=0.45mm, fill=white] (2,5) circle (.35);
    \draw[line width=0.45mm, fill=white] (4,5) circle (.35);
    \draw[line width=0.45mm, fill=white] (6,5) circle (.35);
    \draw[line width=0.45mm, fill=white] (8,5) circle (.35);
    \draw[line width=0.45mm, fill=white] (10,5) circle (.35);
    \draw[line width=0.45mm, fill=white] (0,3) circle (.35);
    \draw[line width=0.45mm, fill=white] (2,3) circle (.35);
    \draw[line width=0.45mm, fill=white] (4,3) circle (.35);
    \draw[line width=0.45mm, fill=white] (6,3) circle (.35);
    \draw[line width=0.45mm, fill=white] (8,3) circle (.35);
    \draw[line width=0.45mm, fill=white] (10,3) circle (.35);
    \draw[line width=0.45mm, fill=white] (0,1) circle (.35);
    \draw[line width=0.45mm, fill=white] (2,1) circle (.35);
    \draw[line width=0.45mm, fill=white] (4,1) circle (.35);
    \draw[line width=0.45mm, fill=white] (6,1) circle (.35);
    \draw[line width=0.45mm, fill=white] (8,1) circle (.35);
    \draw[line width=0.45mm, fill=white] (10,1) circle (.35);

    \path[fill=white] (1,1) circle (.2);
    \node at (1,1) {$\bullet$};
    \path[fill=white] (3,1) circle (.2);
    \node at (3,1) {$\bullet$};
    \path[fill=white] (5,1) circle (.2);
    \node at (5,1) {$\bullet$};
    \path[fill=white] (7,1) circle (.2);
    \node at (7,1) {$\bullet$};
    \path[fill=white] (9,1) circle (.2);
    \node at (9,1) {$\bullet$};

    \path[fill=white] (1,3) circle (.2);
    \node at (1,3) {$\bullet$};
    \path[fill=white] (3,3) circle (.2);
    \node at (3,3) {$\bullet$};
    \path[fill=white] (5,3) circle (.2);
    \node at (5,3) {$\bullet$};
    \path[fill=white] (7,3) circle (.2);
    \node at (7,3) {$\bullet$};
    \path[fill=white] (9,3) circle (.2);
    \node at (9,3) {$\bullet$};

    \path[fill=white] (1,5) circle (.2);
    \node at (1,5) {$\bullet$};
    \path[fill=white] (3,5) circle (.2);
    \node at (3,5) {$\bullet$};
    \path[fill=white] (5,5) circle (.2);
    \node at (5,5) {$\bullet$};
    \path[fill=white] (7,5) circle (.2);
    \node at (7,5) {$\bullet$};
    \path[fill=white] (9,5) circle (.2);
    \node at (9,5) {$\bullet$};

    \node at (1,6) {$0$};
    \node at (3,6) {$1$};
    \node at (5,6) {$1$};
    \node at (7,6) {$0$};
    \node at (9,6) {$0$};
    \node at (1,4) {$1$};
    \node at (3,4) {$1$};
    \node at (5,4) {$0$};
    \node at (7,4) {$0$};
    \node at (9,4) {$0$};
    \node at (1,2) {$0$};
    \node at (3,2) {$1$};
    \node at (5,2) {$1$};
    \node at (7,2) {$0$};
    \node at (9,2) {$0$};
    \node at (1,0) {$0$};
    \node at (3,0) {$1$};
    \node at (5,0) {$1$};
    \node at (7,0) {$0$};
    \node at (9,0) {$0$};
    \node at (0,5) {$1$};
    \node at (2,5) {$0$};
    \node at (4,5) {$0$};
    \node at (6,5) {$1$};
    \node at (8,5) {$1$};
    \node at (10,5) {$1$};
    \node at (0,3) {$0$};
    \node at (2,3) {$1$};
    \node at (4,3) {$1$};
    \node at (6,3) {$0$};
    \node at (8,3) {$0$};
    \node at (10,3) {$0$};
    \node at (0,1) {$0$};
    \node at (2,1) {$0$};
    \node at (4,1) {$0$};
    \node at (6,1) {$0$};
    \node at (8,1) {$0$};
    \node at (10,1) {$0$};
    \node at (1.00,6.8) {$ 1$};
    \node at (3.00,6.8) {$ 2$};
    \node at (5.00,6.8) {$ 3$};
    \node at (7.00,6.8) {$ 4$};
    \node at (9.00,6.8) {$ 5$};

    \node at (-.75,1) {$ 1$};
    \node at (-.75,3) {$ 2$};
    \node at (-.75,5) {$ 3$};
  \end{tikzpicture}}
\]
\caption{An admissible state with toroidal boundary conditions.}
\end{figure}
If $h: \mathbb{Z} \times \mathbb{Z} \to \mathbb{F}_3$ is a function, we say that $h$ is \emph{doubly periodic} if
\begin{align*}
    h_{i+m,j} = h_{i,j + n} =  h_{i,j}
\end{align*}
for all $i,j \in \mathbb{Z}$. Just as before, for such a function $h$, we define its \emph{partial derivatives} $D_xh, D_yh: \mathbb{Z} \times \mathbb{Z} \to \mathbb{F}_3$ by
\[(D_xh)_{i,j} = h_{i+1,j} - h_{i,j} \]
and
\[(D_yh)_{i,j} = h_{i,j+1}- h_{i,j}. \]
Note that $D_xh$ and $D_yh$ are doubly periodic. We define a \emph{toroidal 1-form} to be a formal expression $fdx + gdy$, where $f,g: \mathbb{Z} \times \mathbb{Z} \to \mathbb{F}_3$ are doubly periodic functions. For a doubly periodic function $h$, we define its \emph{exterior derivative} $dh$ as the toroidal 1-form given by
\[dh = (D_xh)dx + (D_yh)dy. \]
We say that a toroidal 1-form $fdx + gdy$ is \emph{closed} if
\[D_yf = D_xg\]
and \emph{exact} if $fdx + gdy = dh$ for some doubly periodic $h: \mathbb{Z} \times \mathbb{Z} \to \mathbb{F}_3$. A toroidal 1-form $fdx + gdy$ is \emph{admissible} if for all $i$ and $j$, $f_{i,j}$ and $g_{i,j}$ are not equal to 2. In this section, we will simply refer to a toroidal 1-form as a 1-form. The following lemma is an analogue of Lemma \ref{poincare}; it computes the 1-dimensional cohomology of the discrete torus.
\begin{lemma}\label{cohomology}
Every closed 1-form can be written uniquely in the form
\[rdx + sdy + \omega, \]
where $r,s \in \mathbb{F}_3$ and $\omega$ is exact.
\end{lemma}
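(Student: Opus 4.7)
The plan is to produce explicit formulas for $r$ and $s$ from $\alpha = fdx+gdy$, reduce the remaining part to the rectangular Poincar\'e lemma, and then argue uniqueness by noting that $rdx+sdy$ is exact only when $r=s=0$. The hypothesis that $3 \nmid m$ and $3 \nmid n$ is what makes both halves of the argument work: it guarantees that $m$ and $n$ are invertible in $\mathbb{F}_3$.

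For existence, I would set
\[r = m^{-1}\sum_{i=1}^{m} f_{i,1}, \qquad s = n^{-1}\sum_{j=1}^{n} g_{1,j},\]
and verify first that these ``averages'' are independent of the row/column chosen. For instance,
\[\sum_{i=1}^{m} f_{i,j+1} - \sum_{i=1}^{m} f_{i,j} = \sum_{i=1}^{m}(D_yf)_{i,j} = \sum_{i=1}^{m}(D_xg)_{i,j} = g_{m+1,j}-g_{1,j}=0,\]
using closedness and double periodicity; the analogous computation handles $s$. With $r$ and $s$ fixed, the 1-form $\alpha' := (f-r)dx + (g-s)dy$ is closed, so on the rectangle $[m+1]\times[n+1]$ Lemma \ref{poincare} produces a potential
\[h_{i,j} = \sum_{a=1}^{i-1}(f_{a,1}-r) + \sum_{b=1}^{j-1}(g_{i,b}-s)\]
with $dh = \alpha'$ on this rectangle. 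The key verification is that this $h$ extends to a doubly periodic function on all of $\mathbb{Z}\times\mathbb{Z}$. The computation
\[h_{m+1,j}-h_{1,j} = \sum_{a=1}^{m}(f_{a,1}-r) + \sum_{b=1}^{j-1}(g_{m+1,b}-g_{1,b}) = \Bigl(\sum_{a=1}^{m}f_{a,1}\Bigr) - rm = 0\]
takes care of horizontal periodicity, and an analogous calculation using the definition of $s$ (together with the already-established independence of the column) handles vertical periodicity. This will be the main technical step, and it is precisely where the choice of normalization of $r,s$ pays off.

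For uniqueness, suppose $rdx+sdy+dh = r'dx+s'dy+dh'$, equivalently $(r-r')dx+(s-s')dy = d(h'-h)$ with $h'-h$ doubly periodic. Summing the $x$-component along a horizontal period gives $(r-r')m = 0$ in $\mathbb{F}_3$, and since $3\nmid m$ this forces $r=r'$; symmetrically $s=s'$. Consequently the exact part $\omega = \alpha - rdx - sdy$ is also uniquely determined. The only real obstacle in the whole argument is the doubly periodic extension above; once the right $r$ and $s$ are extracted, everything else is a direct application of Lemma \ref{poincare} and arithmetic in $\mathbb{F}_3$.
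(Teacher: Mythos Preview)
Your proposal is correct and follows essentially the same approach as the paper: extract $r$ and $s$ as the row/column averages of $f$ and $g$ (using invertibility of $m,n$ in $\mathbb{F}_3$), build the same potential $h_{i,j}=\sum_{a=1}^{i-1}(f_{a,1}-r)+\sum_{b=1}^{j-1}(g_{i,b}-s)$, and prove uniqueness by summing over a period. The only cosmetic difference is that the paper defines $h$ directly on $\mathbb{Z}\times\mathbb{Z}$ via representatives $\tilde{i},\tilde{j}$ and checks the derivative identities across the seam, whereas you invoke Lemma~\ref{poincare} on the rectangle and then verify periodic gluing; these are two phrasings of the same computation.
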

\begin{proof}
Let $fdx + gdy$ be a closed 1-form, where $f,g:\mathbb{Z}\times\mathbb{Z}\to\mathbb{F}_3$ are doubly periodic. Let
\begin{align*}r &= \frac{1}{m}\sum_{i = 1}^{m}f_{i,1},\\ s &= \frac{1}{n}\sum_{j=1}^{n} g_{1,j}.
\end{align*}
We claim that $\omega = fdx + gdy - rdx - sdy$ is exact. To see this, define a function $h: \mathbb{Z} \times \mathbb{Z} \to \mathbb{F}_3$ by
\[h_{i,j} = \sum_{a = 1}^{\tilde{i}-1}(f_{a,1} - r) + \sum_{b = 1}^{\tilde{j} - 1}(g_{i,b} - s),\]
where $\tilde{i}$ is the unique integer such that $1 \leq \tilde{i} \leq m$ with $\tilde{i} \equiv i \pmod{m}$, and $\tilde{j}$ is the integer such that $1 \leq \tilde{j} \leq n$ with $\tilde{j} \equiv j \pmod{n}$. It is clear that $h$ is doubly periodic. Observe that if $i \not\equiv 0 \pmod{m}$, we have $\widetilde{i + 1} = \tilde{i} + 1$, so
\[ \sum_{a = 1}^{\widetilde{i + 1}-1}(f_{a,1} - r) = \sum_{a = 1}^{\tilde{i}}(f_{a,1} - r).\]
If $i \equiv 0 \pmod{m}$, then we also have
\begin{align*}
    \sum_{a = 1}^{\widetilde{i + 1}-1}(f_{a,1} - r) &= \sum_{a = 1}^{0}(f_{a,1} - r) \\
    &= 0 \\
    &= \sum_{a = 1}^{m} (f_{a,1} - r)\\
    &= \sum_{a = 1}^{\tilde{i}}(f_{a,1} - r).
\end{align*}
A similar argument shows that for all $j$,
\[\sum_{b = 1}^{\widetilde{j + 1} - 1}(g_{i,b} - s) = \sum_{b = 1}^{\tilde{j}}(g_{i,b} - s).\]
We now compute
\begin{align*}
    (D_x h)_{i,j} &= \left(\sum_{a = 1}^{\widetilde{i+1}-1}(f_{a,1} - r) + \sum_{b = 1}^{\tilde{j} - 1}(g_{i+1,b} - s)\right) - \left(\sum_{a = 1}^{\tilde{i}-1}(f_{a,1} - r) + \sum_{b = 1}^{\tilde{j} - 1}(g_{i,b} - s)\right) \\
    &= \left(\sum_{a = 1}^{\tilde{i}}(f_{a,1} - r) + \sum_{b = 1}^{\tilde{j} - 1}(g_{i+1,b} - s)\right) - \left(\sum_{a = 1}^{\tilde{i}-1}(f_{a,1} - r) + \sum_{b = 1}^{\tilde{j} - 1}(g_{i,b} - s)\right)\\
    &= (f_{\tilde{i},1} - r) + \sum_{b = 1}^{\tilde{j}-1}(g_{i+1,b} - g_{i,b}) \\
    &= (f_{i,1} - r) + \sum_{b = 1}^{\tilde{j}-1}(D_xg)_{i,b}\\
    &= (f_{i,1} - r) + \sum_{b = 1}^{\tilde{j}-1}(D_yf)_{i,b}\\
    &= f_{i,\tilde{j}} - r \\
    &= f_{i,j} - r
\end{align*}
and
\begin{align*}
    (D_yh)_{i,j} &= \left(\sum_{a = 1}^{\tilde{i}-1}(f_{a,1} - r) + \sum_{b = 1}^{\widetilde{j+1} - 1}(g_{i,b} - s)\right) - \left(\sum_{a = 1}^{\tilde{i}-1}(f_{a,1} - r) + \sum_{b = 1}^{\tilde{j} - 1}(g_{i,b} - s) \right) \\
    &= \left(\sum_{a = 1}^{\tilde{i}-1}(f_{a,1} - r) + \sum_{b = 1}^{\tilde{j}}(g_{i,b} - s)\right) - \left(\sum_{a = 1}^{\tilde{i}-1}(f_{a,1} - r) + \sum_{b = 1}^{\tilde{j} - 1}(g_{i,b} - s) \right)\\
    &= g_{i,\tilde{j}} - s \\
    &= g_{i,j} - s.
\end{align*}
Thus, we have shown that $dh = \omega$, so $\omega$ is exact.

To check uniqueness, it suffices to show that if $rdx + sdy$ is exact, then $r = s = 0$. Suppose that $rdx + sdy = df$, where $f$ is doubly periodic. Then
\begin{align*}
    0 &= \frac{1}{m} \sum_{i = 1}^{m}(f_{i+1,1} - f_{i,1})\\
    &= \frac{1}{m}\sum_{i = 1}^{m}(D_xf)_{i,1}\\
    &= \frac{1}{m}\sum_{i = 1}^{m}r \\
    &= r.
\end{align*}
A similar argument shows that $s = 0$.
\end{proof}
\begin{lemma}
There is a one-to-one correspondence between admissible closed 1-forms and admissible states of the six-vertex model with toroidal boundary conditions.
\end{lemma}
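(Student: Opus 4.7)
The plan is to build the bijection directly from the definitions, using the toroidal boundary conditions to pass between doubly periodic 1-forms and edge labels on the finite lattice. Given an admissible state $A$ with horizontal edge labels $g_{i,j}$ (for $1 \le i \le m+1$, $1 \le j \le n$) and vertical edge labels $f_{i,j}$ (for $1 \le i \le m$, $1 \le j \le n+1$) satisfying $g_{1,j}=g_{m+1,j}$ and $f_{i,1}=f_{i,n+1}$, I would extend $f$ and $g$ to functions on $\mathbb{Z}\times\mathbb{Z}$ by declaring them to be doubly periodic with periods $(m,0)$ and $(0,n)$. The boundary identifications make this extension well-defined and unambiguous, since the prescribed values on the two copies of each identified edge agree.

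Next I would check that the extended 1-form $fdx+gdy$ is closed and admissible. Admissibility is immediate: since every edge of the original state is labeled $0$ or $1$, each value $f_{i,j}$ and $g_{i,j}$ avoids $2$, and this property is preserved under doubly periodic extension. For closedness, I need $D_y f = D_x g$ at every $(i,j)\in\mathbb{Z}\times\mathbb{Z}$. At each interior vertex $v_{i,j}$ of the lattice with $1\le i\le m$, $1\le j\le n$, the admissibility of $A$ together with equation (\ref{admissible-condition}) gives exactly $(D_x g)_{i,j}=(D_y f)_{i,j}$. Double periodicity of $f$ and $g$ then forces the closed condition to hold at every point of $\mathbb{Z}\times\mathbb{Z}$, so $fdx+gdy$ is an admissible closed 1-form.

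Conversely, given an admissible closed 1-form $fdx+gdy$, I would restrict the values of $f$ and $g$ to a fundamental domain, reading off $g_{i,j}$ for $1\le i\le m+1$, $1\le j\le n$ and $f_{i,j}$ for $1\le i\le m$, $1\le j\le n+1$. Double periodicity forces $g_{m+1,j}=g_{1,j}$ and $f_{i,n+1}=f_{i,1}$, which are precisely the toroidal boundary conditions. The admissibility of the 1-form ensures every edge label lies in $\{0,1\}$, while the closed condition at each $(i,j)$ with $1\le i\le m$, $1\le j\le n$ reproduces equation (\ref{admissible-condition}) and hence the vertex admissibility of Figure \ref{uncoloredbw}. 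These two constructions are manifestly inverse to one another, giving the desired bijection.

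I expect no real obstacle here; the content of the lemma is essentially bookkeeping. The only point requiring a moment of care is that closedness of the extended 1-form must hold at every $(i,j)\in\mathbb{Z}\times\mathbb{Z}$ rather than just at the $mn$ vertices of the fundamental domain, but this is automatic once one observes that both $D_x g-D_y f$ and its vanishing locus are doubly periodic with the same periods as $f$ and $g$.
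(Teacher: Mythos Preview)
Your proposal is correct and follows exactly the same approach as the paper: extend the edge labels of a toroidal admissible state to doubly periodic functions on $\mathbb{Z}\times\mathbb{Z}$, associate to it the 1-form $\tilde f\,dx+\tilde g\,dy$, and check this is a bijection. In fact your write-up is more thorough than the paper's, which simply asserts the extension and then says ``it is not difficult to see that this is a one-to-one correspondence.''
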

\begin{proof}
Let $f_{i,j}$ and $g_{i,j}$ be the vertical and horizontal entries of an admissible state $A$ with toroidal boundary conditions. Then we may uniquely extend $f_{i,j}$ and $g_{i,j}$ to doubly periodic functions $\tilde{f},\tilde{g}:\mathbb{Z}\times\mathbb{Z}\to\mathbb{F}_3$. We associate $A$ with the closed admissible 1-form $\tilde{f}dx + \tilde{g}dy$. It is not difficult to see that this is a one-to-one correspondence.
\end{proof}
Call a doubly periodic function $h: \mathbb{Z} \times \mathbb{Z} \to \mathbb{F}_3$ \emph{sparse} if neither $D_xh$ nor $D_yh$ is surjective, and $h_{1,1} = 0$. We can characterize admissible states with toroidal boundary conditions in terms of sparse functions, in the same way that we classified admissible states in the last section in terms of 3-colorings.
\begin{theorem}
There is a one-to-one correspondence between sparse functions and admissible states of the six-vertex model with toroidal boundary conditions.
\end{theorem}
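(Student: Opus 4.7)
The plan is to construct an explicit bijection between sparse functions and admissible states by composing the previous lemma with the cohomology decomposition of Lemma \ref{cohomology}. Given an admissible state $A$, first pass to the corresponding admissible closed toroidal 1-form $\alpha = \tilde f\,dx + \tilde g\,dy$, where $\tilde f$ and $\tilde g$ are the doubly periodic extensions of the vertical and horizontal entries of $A$. Then apply Lemma \ref{cohomology} to write $\alpha = r\,dx + s\,dy + dh$ uniquely with $r,s\in\mathbb{F}_3$ and the normalization $h_{1,1}=0$. The forward direction of the bijection is $A\mapsto h$.

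Checking that the forward map lands in the set of sparse functions is immediate from the decomposition: because $\tilde f = D_xh + r$ takes values in $\{0,1\}$, the image of $D_xh$ misses the value $2-r$ in $\mathbb{F}_3$, so $D_xh$ is not surjective; the argument for $D_yh$ is identical, and $h_{1,1}=0$ by construction. For the inverse, given a sparse $h$ one reads off $r$ from the value $v\in\mathbb{F}_3\setminus\operatorname{image}(D_xh)$ via $r=2-v$, and $s$ analogously from $D_yh$; then $(D_xh+r)\,dx+(D_yh+s)\,dy$ is an admissible closed toroidal 1-form, which the previous lemma converts to an admissible state. Mutual inverseness of the two maps will follow from the uniqueness clause of Lemma \ref{cohomology} together with the fact that the reconstruction of $(r,s)$ exactly undoes the cohomological decomposition.

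The principal obstacle is showing that $(r,s)$ are canonically determined by the sparse function $h$, that is, that the value missed by $D_xh$ (respectively $D_yh$) is unique. When $|\operatorname{image}(D_xh)|=2$ uniqueness is immediate. The delicate case is when the image is a singleton; here the telescoping identity $\sum_{i=1}^{m}(D_xh)_{i,j}=0$ coming from double periodicity, combined with the hypothesis $3\nmid m$, forces $D_xh\equiv 0$, and a canonical convention (anchored by the normalization $h_{1,1}=0$ together with the value of $h$ at an adjacent reference point) must be used to single out the correct $r$. Handling this degenerate case is precisely where the assumption $3\nmid m,n$ is essential; otherwise, the whole proof is structurally parallel to that of Theorem 1, with the pair of winding numbers $(r,s)\in\mathbb{F}_3^2$ now playing the role of the single shift $t\in\mathbb{F}_3$ that appeared in the non-toroidal case.
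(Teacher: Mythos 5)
Your construction follows the paper's own route (decompose via Lemma \ref{cohomology}, send a state to the normalized $h$, and read $(r,s)$ off from the values missed by $D_xh$ and $D_yh$), and you have correctly located the delicate point. But your resolution of that point does not work, and the gap is genuine. When $\operatorname{Im}(D_xh)$ is a singleton, the hypothesis $3\nmid m$ does force $D_xh\equiv 0$, exactly as you say; but then \emph{both} $r=0$ and $r=1$ satisfy $2-r\notin\operatorname{Im}(D_xh)$, and both of the forms $(D_xh+r)\,dx+(D_yh+s)\,dy$ are admissible closed forms with the same exact part and hence the same normalized $h$. No ``canonical convention anchored by $h_{1,1}=0$ and an adjacent reference value'' can recover $r$, because the two states you must distinguish have \emph{identical} $h$ at every point. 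Concretely, the four constant states $f\equiv r$, $g\equiv s$ with $r,s\in\{0,1\}$ are pairwise distinct admissible toroidal states, yet all of them decompose as $r\,dx+s\,dy+dh$ with $h\equiv 0$; your forward map $A\mapsto h$ is $4$-to-$1$ on this fiber, so it is not injective and cannot be inverted by any convention.

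Moreover, this is not a repairable technicality within your plan: the fiber of the forward map over a sparse $h$ has size $(3-|\operatorname{Im}(D_xh)|)\,(3-|\operatorname{Im}(D_yh)|)$, which exceeds $1$ whenever $D_xh\equiv 0$ or $D_yh\equiv 0$. For $m=n=2$ this yields $18$ admissible toroidal states but only $11$ sparse functions, so no bijection of the asserted kind exists; what your argument actually establishes is a bijection between admissible states and pairs $(h,(r,s))$ with $h$ sparse, $2-r\notin\operatorname{Im}(D_xh)$ and $2-s\notin\operatorname{Im}(D_yh)$. Be aware that the paper's own proof silently assumes $r(h)$ and $s(h)$ are uniquely determined by $h$ and so passes over the same degenerate case you flagged; spotting it was right, but the claim that a convention resolves it is the step that fails, and the assumption $3\nmid m,n$ only sharpens the difficulty (it pins the singleton image to $\{0\}$) rather than removing it.
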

\begin{proof}
Let $\mathcal{S}_1$ denote the set of closed admissible 1-forms, and let $\mathcal{S}_2$ be the set of sparse functions. It suffices to find a bijection between $\mathcal{S}_1$ and $\mathcal{S}_2$. Define a map $F: \mathcal{S}_2 \to \mathcal{S}_1$ as follows. If $h$ is a sparse function, define
\[F(h) = r(h)dx + s(h)dy + (D_xh)dx + (D_yh)dy = r(h)dx + s(h)dy + dh, \]
where $2-r(h) \notin \text{Im}(D_xh)$ and $2-s(h) \notin \text{Im}(D_yh)$. We will show that $F$ is a bijection. To see that $F$ is injective, suppose that $F(h) = F(h')$. Then, by the uniqueness result in Lemma \ref{cohomology}, $dh = dh'$, which implies that $h - h'$ is constant. But $h_{1,1} = (h')_{1,1} = 0$, so $h = h'$ and $F$ is injective. To see that $F$ is surjective, let
\[\omega = rdx + sdy + (D_xh)dx + (D_yh)dy \]
be an element of $\mathcal{S}_1$ (by Lemma \ref{cohomology}). Without loss of generality, we may assume that $h_{1,1} = 0$. Then, by definition, $r + D_xh$ and $s + D_yh$ are not surjective, implying that $D_xh$ and $D_yh$ are not surjective. So $\omega = F(h)$. Thus, we have a bijection.
\end{proof}
\section{Admissible States of the Eight-Vertex Model}
In this section, we will consider a different set of admissible labelings.
\begin{figure}[H]
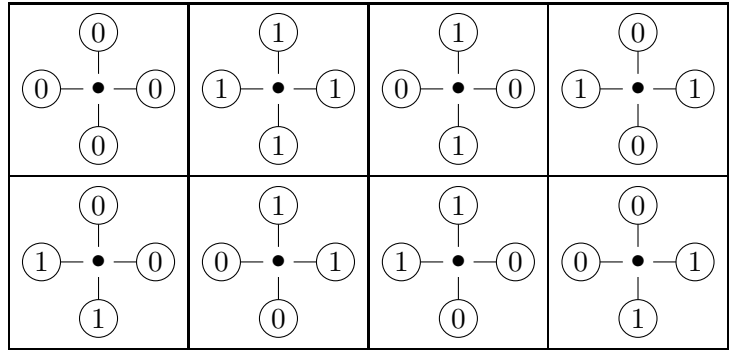

\[
\begin{array}{|c|c|c|c|c|c|}
\hline
  \gammaice{0}{0}{0}{0} &
  \gammaice{1}{1}{1}{1} &
  \gammaice{0}{1}{0}{1} &
  \gammaice{1}{0}{1}{0}\\
\hline
  \gammaice{1}{0}{0}{1} &
  \gammaice{0}{1}{1}{0} &
  \gammaice{1}{1}{0}{0} &
  \gammaice{0}{0}{1}{1}
  \\
\hline\end{array}\]
\caption{Admissible labelings in the eight-vertex model.}
\label{uncoloredbw}
\end{figure}
In the eight-vertex model, we call a state \emph{admissible} if all of its vertices are in one of the eight configurations of Figure 4. Let $f_{i,j}$ and $g_{i,j}$ be the labelings of the vertical edges and horizontal edges as before. At a vertex $v_{i,j}$, we have the labeling
\begin{figure}[H]
\[\begin{tikzpicture}[scale=2]
\coordinate (a) at (-.75, 0);
\coordinate (b) at (0, .75);
\coordinate (c) at (.75, 0);
\coordinate (d) at (0, -.75);
\coordinate (aa) at (-.75,.5);
\coordinate (cc) at (.75,.5);
\draw (a)--(c);
\draw (b)--(d);
\draw[fill=white] (a) circle (.25);
\draw[fill=white] (b) circle (.25);
\draw[fill=white] (c) circle (.25);
\draw[fill=white] (d) circle (.25);
\node at (0,1) { };
\node at (a) {$g_{i,j}$};
\node at (b) {$f_{i,j+1}$};
\node at (c) {$g_{i+1,j}$};
\node at (d) {$f_{i,j}$};
\path[fill=white] (0,0) circle (.2);
\node at (0,0) {\Huge $\bullet$};
\end{tikzpicture}\]
\end{figure}
as before, so a state is admissible exactly when
\begin{align}
f_{i,j} + g_{i,j} + f_{i,j+1} + g_{i+1,j} \equiv 0 \pmod 2 \label{admissible8}
\end{align}
for all $1 \leq i \leq m$ and $1 \leq j \leq n$. So in this section, we will view $f$ and $g$ as functions from $[m] \times [n+1]$ to $\mathbb{F}_3$ and $[m+1] \times [n]$ to $\mathbb{F}_3$ respectively. Since the condition for a state to be admissible is a linear condition in this case, it is significantly easier to find the number of admissible states.
\begin{theorem}\label{8admissible}
The number of admissible states of the eight-vertex model is $2^{m + n + mn}$.
\end{theorem}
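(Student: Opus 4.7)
The plan is to recast the admissibility condition (\ref{admissible8}) as a homogeneous system of $mn$ linear equations over $\mathbb{F}_2$ in the $m(n+1)+(m+1)n = 2mn+m+n$ unknowns given by the edge labels. If these $mn$ equations turn out to be linearly independent, the solution space is an $\mathbb{F}_2$-vector space of dimension $mn+m+n$, and the count $2^{m+n+mn}$ follows immediately.

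To establish independence I would exhibit an explicit parametrization of the solution space. Declare the bottom row of vertical edges $f_{1,1},\ldots,f_{m,1}$ together with the entire collection of horizontal edges $\{g_{i,j} : 1 \le i \le m+1,\ 1 \le j \le n\}$ to be \emph{free}. These account for exactly $m + (m+1)n = m+n+mn$ parameters. The remaining vertical entries $f_{i,j}$ with $j \ge 2$ are then defined row by row through the recursion
$$f_{i,j+1} \equiv f_{i,j} + g_{i,j} + g_{i+1,j} \pmod{2},$$
which is just (\ref{admissible8}) solved for $f_{i,j+1}$. By induction on $j$ this uniquely determines every $f_{i,j}$, and by construction the admissibility relation holds at each vertex $v_{i,j}$.

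It remains to check that every admissible state is obtained this way, but this is immediate: the restriction of any admissible state to the chosen free entries yields some parameter tuple, and then the recursion forces the rest of the $f$'s to agree with the original state (since an admissible state automatically satisfies the same recursion). Thus the map from free parameters to admissible states is a bijection, proving the count is $2^{m+n+mn}$. The main conceptual point, and the reason the argument is so clean, is that the recursion propagates in a single direction (upward in $j$) and never needs to close a loop, so there are no hidden consistency conditions to verify; this is the real obstacle that simply does not arise here, in contrast to what one would encounter in a toroidal setting.
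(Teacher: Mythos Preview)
Your argument is correct. The paper's proof also sets up the admissibility conditions as a linear system over $\mathbb{F}_2$, but computes the dimension of the solution space differently: it defines the linear map $\varphi:V\to W$ sending a state to its tuple of vertex defects, shows $\varphi$ is surjective by exhibiting for each basis vector $e_{a,b}$ an explicit preimage, and then applies rank--nullity. You instead parametrize $\ker\varphi$ directly, choosing the bottom row of vertical edges and all horizontal edges as free coordinates and propagating upward via the recursion. The two arguments are dual---surjectivity of $\varphi$ is exactly the statement that the $mn$ constraints are independent, which your explicit triangular parametrization also certifies. Your version has the mild advantage of producing a concrete coordinate system on the space of admissible states; the paper's version trades that for a one-line image computation and an appeal to rank--nullity.
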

\begin{proof}
Let $V$ be the $\mathbb{F}_2$-vector space of pairs of functions $(f,g)$ with $f: [m] \times [n+1] \to \mathbb{F}_2$ and $g: [m+1] \times [n] \to \mathbb{F}_2$. Let $W$ be $mn$-dimensional vector space with basis vectors $e_{i,j}$ for $1\leq i \leq m$ and $1 \leq j \leq n$. Define a linear map $\varphi: V \to W$ by
\[\varphi(f,g) = \sum_{i = 1}^m \sum_{j = 1}^n (f_{i,j} + g_{i,j} + f_{i,j+1} + g_{i+1,j})e_{i,j}. \]
By equation (\ref{admissible8}), we may view $V' = \ker \varphi$ as the set of admissible states of the eight-vertex model. Fix some $a \in [m]$ and $b \in [n]$. Let $g = 0$ and define $f$ by $f_{i,j} = 0$ if  $i \neq a$ or $j \leq b$, and $f_{i,j} = 1$ otherwise. Then,
\begin{align*}
    \varphi(f,g) &= \sum_{i = 1}^m \sum_{j = 1}^n (f_{i,j} + f_{i,j + 1})e_{i,j} \\
    &= \sum_{j = b}^n (f_{a,j} + f_{a,j + 1})e_{a,j}\\
    &= e_{a,b}.
\end{align*}
Since $a$ and $b$ were arbitrary, this shows that $\varphi$ is surjective. Thus,
\begin{align*}
    \dim V' = \dim V - \dim W = m(n + 1) + (m + 1)n - mn = mn + m + n.
\end{align*}
So $V'$ contains $2^{m + n + mn}$ elements, and the result follows.
\end{proof}
Now we focus our attention on the number of admissible states with a given set of boundary conditions.
\begin{theorem}\label{sumto0}
There exists an admissible state $(f,g)$ with boundary conditions
\[f_{i,1}, f_{i,n+1},g_{1,j},g_{m+1,j} \]
if and only if
\begin{align}\label{goodBC}
    \sum_{i = 1}^m(f_{i,1} + f_{i,n+1}) + \sum_{j = 1}^n(g_{1,j} + g_{m + 1,j}) = 0.
\end{align}
\end{theorem}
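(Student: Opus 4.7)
For necessity, I would sum the admissibility equation (\ref{admissible8}) over all $1 \leq i \leq m$ and $1 \leq j \leq n$. Each interior vertical edge label $f_{i,j}$ with $2 \leq j \leq n$ appears in both $v_{i,j-1}$'s and $v_{i,j}$'s equations, hence contributes $0 \pmod{2}$, while $f_{i,1}$ and $f_{i,n+1}$ each appear in exactly one equation. The analogous observation holds for the horizontal labels $g_{i,j}$: interior ones contribute $0$, and only $g_{1,j}$ and $g_{m+1,j}$ survive. What remains are precisely the terms on the left-hand side of (\ref{goodBC}), so the sum must vanish.

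For sufficiency, I would construct $(f,g)$ by an explicit row-by-row sweep. Rearranging admissibility at $v_{i,j}$ gives
\[g_{i+1,j} = g_{i,j} + f_{i,j} + f_{i,j+1} \pmod{2},\]
so once all vertical labels $f_{i,j}$ are chosen and the left boundary $g_{1,j}$ is fixed, every horizontal label in row $j$ is determined, and matching the prescribed $g_{m+1,j}$ reduces to
\[g_{1,j} + g_{m+1,j} = \sum_{i=1}^m (f_{i,j} + f_{i,j+1}) \pmod{2} \qquad (1 \leq j \leq n).\]
Writing $s_j := \sum_{i=1}^m f_{i,j} \pmod{2}$, these $n$ row conditions collapse to the telescoping chain $s_j + s_{j+1} = g_{1,j} + g_{m+1,j}$. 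The endpoints $s_1$ and $s_{n+1}$ are forced by the given bottom and top boundaries, and summing the chain over $j$ shows the system is solvable if and only if $s_1 + s_{n+1} = \sum_{j=1}^n(g_{1,j} + g_{m+1,j})$, which is exactly hypothesis (\ref{goodBC}).

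Granted (\ref{goodBC}), I would define $s_2, \ldots, s_n$ inductively from $s_{j+1} = s_j + g_{1,j} + g_{m+1,j}$, then for each interior row $2 \leq j \leq n$ pick any $f_{1,j}, \ldots, f_{m,j} \in \mathbb{F}_2$ summing to $s_j$, and finally fill in the $g_{i,j}$ using the rightward recurrence. By construction the state is admissible, the left/right boundaries match by the row-sum identity, and the top/bottom rows of $f$-values are exactly the prescribed ones. I expect no substantive obstacle beyond verifying that the four boundary strings couple only through this single telescoping identity, so that the interior freedom in choosing each $f_{i,j}$ introduces no hidden constraint.
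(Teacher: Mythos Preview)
Your necessity argument is exactly what the paper does: sum the local admissibility relation over all vertices and observe that interior edge labels cancel in pairs.

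For sufficiency your approach is correct but genuinely different from the paper's. The paper writes down one explicit state by hand (specific formulas for every interior $f_{i,j}$ and $g_{i,j}$) and then verifies admissibility vertex by vertex through nine separate cases. You instead parametrize the problem: fix the vertical labels, propagate each row of horizontal labels rightward by the recurrence $g_{i+1,j}=g_{i,j}+f_{i,j}+f_{i,j+1}$, and reduce matching the right boundary to the row-sum chain $s_j+s_{j+1}=g_{1,j}+g_{m+1,j}$, whose solvability telescopes to exactly (\ref{goodBC}). This is cleaner and avoids all case-checking; it also makes visible the residual freedom (each interior row of $f$'s is constrained only by a single parity), which immediately suggests the $2^{(m-1)(n-1)}$ count proved in the paper's next theorem. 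The paper's construction, by contrast, has the virtue of being completely explicit with no choices left to the reader. Both are fine; yours is the more structural argument.
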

\begin{proof}
Consider an admissible state $(f,g)$ with vertical edges $f_{i,j}$ and horizontal edges $g_{i,j}$. By equation (\ref{admissible8}), we have
\begin{align*}
    0 &= \sum_{i = 1}^m \sum_{j = 1}^n (f_{i,j} + g_{i,j} + f_{i,j+1} + g_{i+1,j}) \\
    &= \sum_{i = 1}^m\sum_{j = 1}^n (f_{i,j} + g_{i,j}) + \sum_{i = 1}^m\sum_{j = 2}^{n + 1} f_{i,j} + \sum_{i = 2}^{m+1}\sum_{j = 1}^n g_{i,j}\\
    &= \sum_{i = 1}^m \left(f_{i,1} + \sum_{j = 2}^nf_{i,j} + \sum_{j=1}^ng_{i,j} + f_{i,n+1} + \sum_{j = 2}^n f_{i,j}\right) + \sum_{i = 2}^{m+1}\sum_{j = 1}^n g_{i,j} \\
    &= \sum_{i = 1}^m(f_{i,1} + f_{i,n+1}) + \sum_{i = 1}^m\sum_{j = 1}^n g_{i,j} + \sum_{i = 2}^{m+1}\sum_{j=1}^ng_{i,j}\\
    &= \sum_{i = 1}^m(f_{i,1} + f_{i,n+1}) + \sum_{j = 1}^n(g_{1,j}+g_{m+1,j}).
\end{align*}
To prove the converse, let \[f_{i,1}, f_{i,n+1},g_{1,j},g_{m+1,j}\]
be prescribed boundary conditions such that equation (\ref{goodBC}) holds. We will find an admissible state $(f,g)$ with these boundary conditions. Let us define
\[g_{i,j} = 0\]
for $2 \leq i \leq m$ and $2 \leq j \leq n$, and
\[g_{i,1} = \sum_{b = 1}^n g_{1,b} + \sum_{a = 1}^{i-1}(f_{a,1}+f_{a,n+1})\]
for $2 \leq i \leq m$. We also define
\[f_{i,j} = f_{i,n+1} \]
for $2 \leq i \leq m - 1$ and $2 \leq j \leq n$,
\[f_{1,j} = f_{1,n+1} + \sum_{b = 1}^{n - j + 1}g_{1,n - b + 1}\]
for $2 \leq j \leq n$, and
\[f_{m,j} = f_{m,n+1} + \sum_{b = 1}^{n - j + 1}g_{m+1,n - b + 1} \]
for $2 \leq j \leq n$. We claim that this defines an admissible state. To see this, we will look at several different cases. If $i = 1$ and $j = 1$, then
\begin{align*}
    f_{i,j} + f_{i,j+1} + g_{i,j} + g_{i + 1,j} &= f_{1,1} + \left(f_{1,n+1} + \sum_{b = 1}^{n-1} g_{1,n-b+1}\right)\\&+ g_{1,1} + \left(\sum_{b = 1}^n g_{1,b} + f_{1,1} + f_{1,n+1} \right) \\
    &= 0.
\end{align*}
If $i = 1$ and $2 \leq j \leq n - 1$, then
\begin{align*}
    f_{i,j} + f_{i,j+1} + g_{i,j} + g_{i + 1,j} &= \left(f_{1,n+1} + \sum_{b = 1}^{n - j + 1}g_{1,n-b+1}\right) + \left(f_{1,n+1} + \sum_{b = 1}^{n - j} g_{1,n - b + 1} \right)+ g_{1,j} + 0\\
    &= 0.
\end{align*}
If $i = 1$ and $j = n$, then
\begin{align*}
     f_{i,j} + f_{i,j+1} + g_{i,j} + g_{i + 1,j} &= \left(f_{1,n+1} + g_{1,n} \right) + f_{1,n+1} + g_{1,n} + 0\\
     &= 0.
\end{align*}
If $2 \leq i \leq m - 1$ and $j = 1$, then
\begin{align*}
     f_{i,j} + f_{i,j+1} + g_{i,j} + g_{i + 1,j} &= f_{i,1} + f_{i,n+1} + \left(\sum_{b = 1}^n g_{1,b} + \sum_{a = 1}^{i - 1}(f_{a,1} + f_{a,n+1}) \right) \\&+ \left(\sum_{b = 1}^n g_{1,b}
     + \sum_{a = 1}^i (f_{a,1} + f_{a,n+1})\right)\\
     &= 0.
\end{align*}
If $2 \leq i \leq m - 1$ and $2 \leq j \leq n - 1$, then
\begin{align*}
     f_{i,j} + f_{i,j+1} + g_{i,j} + g_{i + 1,j} &= f_{i,n+1} + f_{i,n+1} + 0 + 0\\
     &= 0.
\end{align*}
If $2 \leq i \leq m - 1$ and $j = n$, then
\begin{align*}
     f_{i,j} + f_{i,j+1} + g_{i,j} + g_{i + 1,j} &= f_{i,n+1} + f_{i,n+1} + 0 + 0\\
     &= 0.
\end{align*}
If $i = m$ and $j = 1$, then by equation (\ref{goodBC}),
\begin{align*}
     f_{i,j} + f_{i,j+1} + g_{i,j} + g_{i + 1,j} &= f_{m,1} + \left(f_{m,n+1} + \sum_{b = 1}^{n - 1}g_{m+1,n-b+1} \right)\\
     &+ \left(\sum_{b = 1}^n g_{1,b} + \sum_{a = 1}^{m - 1}(f_{a,1} + f_{a,n+1}) \right) + g_{m+1,1} \\
     &= \sum_{b = 1}^{n}(g_{1,b} + g_{m + 1,b}) + \sum_{a = 1}^m (f_{a,1} + f_{a,n+1}) \\
     &= 0.
\end{align*}
If $i = m$ and $2 \leq j \leq n - 1$, then
\begin{align*}
    f_{i,j} + f_{i,j+1} + g_{i,j} + g_{i + 1,j} &= \left(f_{m,n+1} + \sum_{b = 1}^{n - j + 1}g_{m + 1,n-b+1} \right) + \left(f_{m,n+1} + \sum_{b = 1}^{n - j}g_{m+1,n-b+1} \right)\\
    &+0 + g_{m+1,j}\\
    &= 0.
\end{align*}
If $i = m$ and $j = n$, then
\begin{align*}
    f_{i,j} + f_{i,j+1} + g_{i,j} + g_{i + 1,j} &= \left(f_{m,n+1} + g_{m + 1,n} \right) + f_{m,n+1} + 0 + g_{m + 1,n}\\
    &= 0.
\end{align*}
Thus, the state we have defined is admissible.
\end{proof}
\begin{theorem}
Let $f_{i,1}, f_{i,n+1},g_{1,j},g_{m+1,j}\in \mathbb{F}_2$ be values such that
\[\sum_{i = 1}^m(f_{i,1} + f_{i,n+1}) + \sum_{j = 1}^n(g_{1,j} + g_{m + 1,j}) = 0.\] Then the number of admissible states with boundary conditions $f_{i,1}, f_{i,n+1}, g_{1,j},g_{m+1,j}$ is $2^{(m-1)(n-1)}$.
\end{theorem}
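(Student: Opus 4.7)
The plan is to view this as an immediate consequence of linear algebra, leveraging Theorems \ref{8admissible} and \ref{sumto0}. Since the admissibility condition (\ref{admissible8}) is $\mathbb{F}_2$-linear, the set $V'$ of admissible states is an $\mathbb{F}_2$-vector space, and by Theorem \ref{8admissible} we know $\dim V' = mn + m + n$. The assignment of a state to its tuple of boundary values is itself $\mathbb{F}_2$-linear, so I would package the problem in terms of a single map.

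Define the boundary map
\[B : V' \longrightarrow \mathbb{F}_2^{2m + 2n}, \qquad (f,g) \longmapsto \bigl(f_{i,1},\, f_{i,n+1},\, g_{1,j},\, g_{m+1,j}\bigr)_{i \in [m],\, j \in [n]}.\]
The set of admissible states with the prescribed boundary is precisely the fiber $B^{-1}(\beta)$ for $\beta$ the given tuple. Since any nonempty fiber of a linear map is a coset of the kernel, it suffices to prove two things: that this fiber is nonempty, and that $\dim \ker B = (m-1)(n-1)$.

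Nonemptiness is exactly the content of Theorem \ref{sumto0}, whose hypothesis is satisfied. For the kernel dimension I would identify the image of $B$. The forward direction of Theorem \ref{sumto0} shows that $\operatorname{im} B$ is contained in the hyperplane $H \subset \mathbb{F}_2^{2m + 2n}$ cut out by the single linear equation
\[\sum_{i=1}^m (f_{i,1} + f_{i,n+1}) + \sum_{j=1}^n (g_{1,j} + g_{m+1,j}) = 0,\]
and the converse direction of the same theorem shows every element of $H$ is hit. Hence $\operatorname{im} B = H$, and $\dim \operatorname{im} B = 2m + 2n - 1$.

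Rank-nullity then gives
\[\dim \ker B = (mn + m + n) - (2m + 2n - 1) = mn - m - n + 1 = (m-1)(n-1),\]
so every nonempty fiber has exactly $2^{(m-1)(n-1)}$ elements, proving the theorem. There is essentially no obstacle here beyond carefully invoking the two prior theorems; the only thing to be mindful of is recording that $B$ is linear and that Theorem \ref{sumto0} supplies both inclusions needed to pin down $\operatorname{im} B$ as a hyperplane of codimension exactly one (not smaller).
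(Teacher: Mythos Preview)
Your proof is correct and is essentially the same argument as the paper's: both use Theorem~\ref{sumto0} to identify the set of attainable boundaries as a hyperplane of dimension $2m+2n-1$, observe that all nonempty fibers have the same size (you via cosets of $\ker B$, the paper via an explicit translation bijection), and then divide the total count $2^{mn+m+n}$ from Theorem~\ref{8admissible} by $2^{2m+2n-1}$. Your rank--nullity phrasing is just a slightly slicker packaging of the paper's division step.
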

\begin{proof}
Let $\mathcal{S}$ denote the set of admissible states with the boundary conditions above, and let $\mathcal{S}_0$ denote the set of admissible states with boundary values all 0. We claim that $\mathcal{S}$ and $\mathcal{S}_0$ have the same number of elements. To see this, note first that $\mathcal{S}$ is non-empty by Theorem \ref{sumto0}, so let $(f^1, g^1)\in\mathcal{S}$. Then we define a map $\psi: \mathcal{S}_0 \to \mathcal{S}$ by
\[\psi(f^0, g^0) = (f^0 + f^1, g^0 + g^1). \]
It is easy to see that $\psi$ is injective. To see that $\psi$ is surjective, suppose that $(f^2, g^2) \in \mathcal{S}$. Then $(f^2 - f^1, g^2 - g^1) \in \mathcal{S}_0$, so $\psi(f^2 - f^1, g^2 - g^1) = (f^2, g^2)$. Thus, $\psi$ is a bijection between $\mathcal{S}_0$ and $\mathcal{S}$. This shows that all sets of boundary conditions having at least one admissible state have the same number of admissible states. It is not difficult to see that the number of sets of boundary conditions satisfying (\ref{goodBC}) is $2^{2m + 2n - 1}$. But by Theorem \ref{8admissible}, there are $2^{m + n + mn}$ admissible states across all boundary conditions. So for a given set of boundary conditions, there are
\[\frac{2^{m + n + mn}}{2^{2m + 2n - 1}} = 2^{(m - 1)(n - 1)} \]
admissible states.
\end{proof}
\section{Yang-Baxter Equation for the Eight-Vertex Model}
We continue to study the eight-vertex model. First, we review some basic properties of the Yang-Baxter equation. To each admissible labeling, we assign a value in some field $\mathbb{F}$, which we call its \emph{Boltzmann weight}.
\begin{figure}[H]
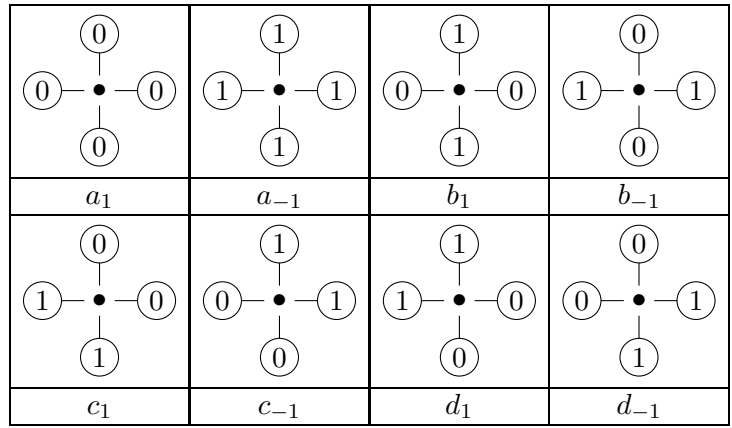

\[
\begin{array}{|c|c|c|c|c|c|}
\hline
  \gammaice{0}{0}{0}{0} &
  \gammaice{1}{1}{1}{1} &
  \gammaice{0}{1}{0}{1} &
  \gammaice{1}{0}{1}{0}\\
\hline
a_1 & a_{-1} & b_1 & b_{-1}\\
\hline
  \gammaice{1}{0}{0}{1} &
  \gammaice{0}{1}{1}{0} &
  \gammaice{1}{1}{0}{0} &
  \gammaice{0}{0}{1}{1}
  \\
\hline
c_1 & c_{-1} & d_1 & d_{-1}\\
\hline\end{array}\]
\caption{Admissible labelings, along with their Boltzmann weights.}
\label{uncoloredbw}
\end{figure}
We will also assign Boltzmann weights to diagonally oriented vertices.
\begin{figure}[H]
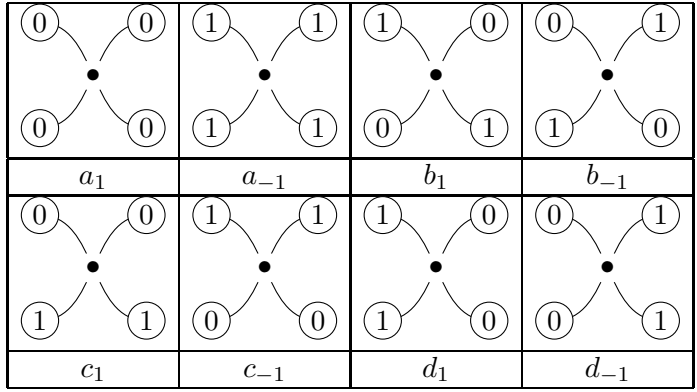

\[
\begin{array}{|c|c|c|c|c|c|}
\hline
  \gammagamma{0}{0}{0}{0} &
  \gammagamma{1}{1}{1}{1} &
  \gammagamma{0}{1}{0}{1} &
  \gammagamma{1}{0}{1}{0}\\
\hline
a_1 & a_{-1} & b_1 & b_{-1}\\
\hline
  \gammagamma{1}{0}{0}{1} &
  \gammagamma{0}{1}{1}{0} &
  \gammagamma{1}{1}{0}{0} &
  \gammagamma{0}{0}{1}{1}
  \\
\hline
c_1 & c_{-1} & d_1 & d_{-1}\\
\hline\end{array}\]
\caption{Admissible labelings, along with their Boltzmann weights, for diagonally oriented vertices.}
\label{uncoloredbw}
\end{figure}
To each such assignment of Boltzmann weights, we associate a matrix
\[R = \begin{pmatrix} a_1 & & & d_1\\
& b_1 & c_1 & \\
& c_{-1} & b_{-1} &\\
d_{-1} & & & a_{-1}
\end{pmatrix} = \begin{pmatrix} a_1(R) & & & d_1(R)\\
& b_1(R) & c_1(R) & \\
& c_{-1}(R) & b_{-1}(R) &\\
d_{-1}(R) & & & a_{-1}(R)
\end{pmatrix}.\]
Let $V$ be a vector space with basis $v_0$ and $v_1$. Then we may view $R$ as an endomorphism of $V \otimes V$ with respect to the basis $v_0 \otimes v_0$, $v_0 \otimes v_1$, $v_1 \otimes v_0$, and $v_1 \otimes v_1$. We write
\[R(v_\nu \otimes v_\beta) = \sum_{\theta,\gamma}R_{\nu\beta}^{\theta\gamma}v_\theta \otimes v_\gamma.\]
For example, we have $R_{01}^{10} = c_{-1}(R)$. Observe that $R_{\nu\beta}^{\theta\gamma}$ is the Boltzmann weight of the following labelings.
\begin{figure}[H]
\[
\gammagamma{\nu}{\beta}{\theta}{\gamma} \qquad \gammaice{\nu}{\beta}{\theta}{\gamma}\]
\label{uncoloredbw}
\end{figure}

Now if $\phi$ is an endormorphism of $V \otimes V$, we define endomorphisms $\phi_{12}, \phi_{23}, \phi_{13}$ of $V\otimes V \otimes V$ as follows. If $\phi = \phi' \otimes \phi''$ for $\phi', \phi'' \in \text{End}(V)$, then we define
\begin{align*}
    \phi_{12} &= \phi' \otimes \phi'' \otimes 1\\
    \phi_{13} &= \phi' \otimes 1 \otimes \phi''\\
    \phi_{23} &= 1 \otimes \phi \otimes \phi''.
\end{align*}
We extend these definitions for all $\phi$ by linearity. For $\phi, \psi,\chi \in \text{End}(V\otimes V \otimes V)$, we define their \emph{Yang-Baxter commutator} $[[\phi,\psi,\chi]]$ by
\[[[\phi,\psi,\chi]] = \phi_{12}\psi_{13}\chi_{23} - \chi_{23}\psi_{13}\phi_{12}.\]
For $R, S, T \in \text{End}(V \otimes V)$, we say that the \emph{star-triangle relation} holds if
\begin{equation}
\label{eqn:ybe}
\hfill
\sum_{\gamma,\mu,\nu}\quad
\begin{tikzpicture}[baseline=(current bounding box.center)]
  \draw (0,1) to [out = 0, in = 180] (2,3) to (4,3);
  \draw (0,3) to [out = 0, in = 180] (2,1) to (4,1);
  \draw (3,0) to (3,4);
  \draw[fill=white] (0,1) circle (.3);
  \draw[fill=white] (0,3) circle (.3);
  \draw[fill=white] (3,4) circle (.3);
  \draw[fill=white] (4,3) circle (.3);
  \draw[fill=white] (4,1) circle (.3);
  \draw[fill=white] (3,0) circle (.3);
  \draw[fill=white] (2,3) circle (.3);
  \draw[fill=white] (2,1) circle (.3);
  \draw[fill=white] (3,2) circle (.3);
  \node at (0,1) {$\sigma$};
  \node at (0,3) {$\tau$};
  \node at (3,4) {$\beta$};
  \node at (4,3) {$\theta$};
  \node at (4,1) {$\rho$};
  \node at (3,0) {$\alpha$};
  \node at (2,3) {$\nu$};
  \node at (3,2) {$\gamma$};
  \node at (2,1) {$\mu$};
\path[fill=white] (3,3) circle (.3);
\node at (3,3) {$S$};
\path[fill=white] (3,1) circle (.3);
\node at (3,1) {$T$};
\path[fill=white] (1,2) circle (.3);
\node at (1,2) {$R$};
\end{tikzpicture}\quad
= \sum_{\delta,\phi,\psi}\quad
\begin{tikzpicture}[baseline=(current bounding box.center)]
  \draw (0,1) to (2,1) to [out = 0, in = 180] (4,3);
  \draw (0,3) to (2,3) to [out = 0, in = 180] (4,1);
  \draw (1,0) to (1,4);
  \draw[fill=white] (0,1) circle (.3);
  \draw[fill=white] (0,3) circle (.3);
  \draw[fill=white] (1,4) circle (.3);
  \draw[fill=white] (4,3) circle (.3);
  \draw[fill=white] (4,1) circle (.3);
  \draw[fill=white] (1,0) circle (.3);
  \draw[fill=white] (2,3) circle (.3);
  \draw[fill=white] (1,2) circle (.3);
  \draw[fill=white] (2,1) circle (.3);
  \node at (0,1) {$\sigma$};
  \node at (0,3) {$\tau$};
  \node at (1,4) {$\beta$};
  \node at (4,3) {$\theta$};
  \node at (4,1) {$\rho$};
  \node at (1,0) {$\alpha$};
  \node at (2,3) {$\psi$};
  \node at (1,2) {$\delta$};
  \node at (2,1) {$\phi$};
\path[fill=white] (1,3) circle (.3);
\node at (1,3) {$T$};
\path[fill=white] (1,1) circle (.3);
\node at (1,1) {$S$};
\path[fill=white] (3,2) circle (.3);
\node at (3,2) {$R$};
\end{tikzpicture}\quad.
\end{equation}
Here we associate a lattice state with the product of the Boltzmann weights of its vertices. In other words, the star-triangle relation holds when
\[\sum_{\gamma,\mu,\nu} R_{\sigma\tau}^{\nu\mu}S_{\nu\beta}^{\theta\gamma}T_{\mu\gamma}^{\rho\alpha} = \sum_{\delta,\phi,\psi}T_{\tau\beta}^{\psi\delta}S_{\sigma\delta}^{\phi\alpha}R_{\phi\psi}^{\theta\rho}.\]
The main fact we will use in order to do computations is that the star-triangle relation is equivalent to the vanishing of the Yang-Baxter commutator. For a proof of this statement, see \cite{BBF}.
\begin{lemma}
Let $R, S, T \in \End(V\otimes V)$. Then the star-triangle relation holds for $R,S,T$ if and only if $[[R,S,T]] = 0$.
\end{lemma}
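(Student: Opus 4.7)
The plan is to translate both sides of the star-triangle relation (\ref{eqn:ybe}) and the Yang-Baxter commutator $[[R,S,T]]$ into statements about matrix entries of operators on $V\otimes V\otimes V$, and observe that they are the same equation. The three strands of each diagram correspond to the three tensor factors: factor 1 to the strand carrying $\sigma\to\theta$, factor 2 to the strand carrying $\tau\to\rho$, and factor 3 to the vertical strand, oriented downward so that $v_\beta$ (the top label) is the input and $v_\alpha$ (the bottom label) is the output.

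First I would unpack the definitions of $\phi_{12},\phi_{13},\phi_{23}$ on simple tensors. Extending by linearity from the decomposable case $\phi=\phi'\otimes\phi''$, for any $\phi\in\End(V\otimes V)$ with matrix entries $\phi_{ab}^{xy}$ one obtains
\[
\phi_{12}(v_a\otimes v_b\otimes v_c)=\sum_{x,y}\phi_{ab}^{xy}\,v_x\otimes v_y\otimes v_c,
\]
\[
\phi_{13}(v_a\otimes v_b\otimes v_c)=\sum_{x,y}\phi_{ac}^{xy}\,v_x\otimes v_b\otimes v_y,
\]
\[
\phi_{23}(v_a\otimes v_b\otimes v_c)=\sum_{x,y}\phi_{bc}^{xy}\,v_a\otimes v_x\otimes v_y.
\]

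Next I would apply $T_{23}S_{13}R_{12}$ to $v_\sigma\otimes v_\tau\otimes v_\beta$ one operator at a time. Following the LHS of (\ref{eqn:ybe}), $R_{12}$ fires first (producing summation indices $\nu,\mu$ in factors 1 and 2), then $S_{13}$ (producing $\theta,\gamma$ in factors 1 and 3), then $T_{23}$ (producing $\rho,\alpha$ in factors 2 and 3). Collecting the coefficient of $v_\theta\otimes v_\rho\otimes v_\alpha$ yields precisely $\sum_{\gamma,\mu,\nu}R_{\sigma\tau}^{\nu\mu}S_{\nu\beta}^{\theta\gamma}T_{\mu\gamma}^{\rho\alpha}$, the LHS of the star-triangle relation. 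The parallel computation for $R_{12}S_{13}T_{23}$ on the same input (firing $T_{23}$ first with new summation indices $\psi,\delta$, then $S_{13}$ with $\phi,\alpha$, then $R_{12}$ with $\theta,\rho$) reproduces the RHS of the star-triangle relation as the coefficient of the same basis vector.

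Since two endomorphisms of $V\otimes V\otimes V$ coincide if and only if all of their matrix entries in the basis $\{v_a\otimes v_b\otimes v_c\}_{a,b,c}$ agree, the operator identity $R_{12}S_{13}T_{23}=T_{23}S_{13}R_{12}$, i.e.\ $[[R,S,T]]=0$, is equivalent to the star-triangle relation holding for every choice of boundary indices $\sigma,\tau,\alpha,\beta,\theta,\rho$. The main bookkeeping hazard is keeping the subscript-versus-superscript and first-versus-second-index conventions of each R-matrix consistent with the diagram; I would write a small dictionary at each vertex (for instance at $S$ on the LHS diagram: factor-1 input $\nu$, factor-3 input $\beta$, factor-1 output $\theta$, factor-3 output $\gamma$, giving $S_{\nu\beta}^{\theta\gamma}$) before carrying out the calculations, to prevent silent index swaps.
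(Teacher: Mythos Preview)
Your argument is correct and is the standard verification that the pictorial star-triangle relation is nothing other than the entrywise form of the operator identity $R_{12}S_{13}T_{23}=T_{23}S_{13}R_{12}$. The index bookkeeping you describe checks out: applying $T_{23}S_{13}R_{12}$ to $v_\sigma\otimes v_\tau\otimes v_\beta$ produces $\sum_{\gamma,\mu,\nu}R_{\sigma\tau}^{\nu\mu}S_{\nu\beta}^{\theta\gamma}T_{\mu\gamma}^{\rho\alpha}$ as the coefficient of $v_\theta\otimes v_\rho\otimes v_\alpha$, and the other composition gives the right-hand side, so the two statements are equivalent.

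Note, however, that the paper does not actually prove this lemma: it simply states the result and refers the reader to \cite{BBF} for a proof. So there is no ``paper's own proof'' to compare against; you have supplied the (routine) argument that the paper chose to outsource.
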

Given matrices $S$ and $T$, we will determine necessary conditions for there to exist $R$ such that $[[R,S,T]] = 0$. We will follow the approach of Galleas and Martins \cite{GalleasMartins}. Our analysis will be more general, since we do not assume that the Boltzmann weights $c_1, c_{-1}, d_1, d_{-1}$ satisfy $c_1 = c_{-1}$ and $d_1 = d_{-1}$. Suppose that $R, S, T \in \text{End}(V \otimes V)$ are Boltzmann weights such that $[[R,S,T]] = 0$. Moreover, assume that $a_1(S)$, $b_1(S)$, $c_1(S)$, $d_1(S)$, $a_2(S)$, $b_2(S)$, $c_2(S)$, $d_2(S)$, $a_1(T)$, $b_1(T)$, $c_1(T)$, $d_1(T)$, $a_2(T)$, $b_2(T)$, $c_2(T)$, $d_2(T)$ are nonzero. A computation shows that the condition $[[R,S,T]] = 0$ can be expressed as the system of 28 equations
\begin{align}
    a_j(T)a_j(S)d_i(R) + d_i(T)c_i(S)a_{-j}(R) &= c_i(T)d_i(S)a_j(R) + b_{-j}(T)b_{-j}(S)d_i(R)\label{eqn:1}\\
    d_i(T)b_j(S)c_i(R) + a_j(T)d_i(S)b_{-j}(R) &= b_j(T)d_i(S)a_j(R) + c_{-i}(T)b_{-j}(S)d_i(R)\label{eqn:2}\\
     d_{i}(T)b_j(S)b_j(R) + a_j(T)d_{i}(S)c_{-i}(R) &= d_{i}(T)a_j(S)a_j(R) + a_{-j}(T)c_{-i}(S)d_{i}(R)\label{eqn:3}\\
     c_i(T)a_j(S)c_i(R) + b_j(T)c_i(S)b_{-j}(R) &= a_j(T)c_i(S)a_j(R) + d_{-i}(T)a_{-j}(S)d_{i}(R)\label{eqn:4}\\
     c_i(T)a_j(S)b_j(R) + b_j(T)c_i(S)c_{-i}(R) &= c_i(T)b_j(S)a_j(R) + b_{-j}(T)d_{-i}(S)d_i(R)\label{eqn:5}\\
     b_{-j}(T)a_j(S)c_i(R) + c_{-i}(T)c_i(S)b_{-j}(R) &= d_{-i}(T)d_i(S)b_j(R) + a_j(T)b_{-j}(S)c_i(R)\label{eqn:6}\\
     c_1(T)c_{-1}(S)c_1(R) &= c_{-1}(T)c_1(S)c_{-1}(R)\label{eqn:7}\\
     d_1(T)c_1(S)d_{-1}(R) &= d_{-1}(T)c_{-1}(S)d_1(R)\label{eqn:8}\\
     c_1(T)d_1(S)d_{-1}(R) &= c_{-1}(T)d_{-1}(S)d_1(R)\label{eqn:9}\\
     d_1(T)d_{-1}(S)c_1(R) &= d_{-1}(T)d_1(S)c_{-1}(R)\label{eqn:10}
\end{align}

for $i,j \in \{-1, 1\}$. By equation $(k, r, s)$, we mean equation $k$ with $i=r$ and $j=s$ substituted in. Solving for $a_{-j}(R)$ in (\ref{eqn:4},$i,-j$) gives us
\begin{align}
    a_{-j}(R) = \frac{c_{i}(T)a_{-j}(S)c_{i}(R) + b_{-j}(T)c_{i}(S)b_{j}(R) - d_{-i}(T)a_{j}(S)d_{i}(R)}{a_{-j}(T)c_{i}(S)} \label{eqn:11}
\end{align}
and solving for $a_{j}(R)$ in (\ref{eqn:5},$i,j$) gives us
\begin{align}
    a_j(R) = \frac{c_i(T)a_j(S)b_j(R)+b_j(T)c_i(S)c_{-i}(R)-b_{-j}(T)d_{-i}(S)d_i(R)}{c_i(T)b_j(S)}. \label{eqn:12}
\end{align}
Substituting (\ref{eqn:11},$i,j$) and (\ref{eqn:12},$i,j$) into (\ref{eqn:1},$i,j$) yields
\begin{align*}
    &a_j(T)a_j(S)d_i(R)\\&+ d_i(T)c_i(S)\left(\frac{c_{i}(T)a_{-j}(S)c_{i}(R) + b_{-j}(T)c_{i}(S)b_{j}(R) - d_{-i}(T)a_{j}(S)d_{i}(R)}{a_{-j}(T)c_{i}(S)}\right)\\
    &= c_i(T)d_i(S)\left(\frac{c_i(T)a_j(S)b_j(R)+b_j(T)c_i(S)c_{-i}(R)-b_{-j}(T)d_{-i}(S)d_i(R)}{c_i(T)b_j(S)}\right)\\&+b_{-j}(T)b_{-j}(S)d_i(R).
\end{align*}
Rearranging this equation gives us
\begin{align}
&b_j(R)[d_i(T)b_{-j}(T)c_i(S)b_j(S) - d_i(S)c_i(T)a_j(S)a_{-j}(T)] = -c_{i}(R)[d_i(T)b_j(S)c_i(T)a_{-j}(S)]\nonumber\\
&+c_{-i}(R)[d_i(S)b_j(T)c_i(S)a_{-j}(T)] + d_i(R)[a_{-j}(T)b_{-j}(T)[b_j(S)b_{-j}(S) - d_i(S)d_{-i}(S)]\nonumber\\
&+a_j(S)b_j(S)[d_i(T)d_{-i}(T) - a_j(T)a_{-j}(T)]].\label{eqn:13}
\end{align}
 Equation (\ref{eqn:7},$i,j$) implies that
\begin{align}
    c_{-i}(R) &= c_i(R)\frac{c_i(T)c_{-i}(S)}{c_{-i}(T)c_i(S)}.\label{eqn:14}
\end{align}
Substituting this expression for $c_{-i}(R)$ into (\ref{eqn:13},$i,j$) gives us
\begin{align}
    &b_j(R)[d_i(T)b_{-j}(T)c_i(S)b_j(S) - d_i(S)c_i(T)a_j(S)a_{-j}(T)]\nonumber\\
&=c_{i}(R)\left[\frac{-c_{-i}(T)d_i(T)b_j(S)c_i(T)a_{-j}(S)+c_i(T)c_{-i}(S)d_i(S)b_j(T)a_{-j}(T)}{c_{-i}(T)}\right]\nonumber\\&+ d_i(R)[a_{-j}(T)b_{-j}(T)[b_j(S)b_{-j}(S) - d_i(S)d_{-i}(S)]\nonumber\\
&+a_j(S)b_j(S)[d_i(T)d_{-i}(T) - a_j(T)a_{-j}(T)]]\label{eqn:15}
\end{align}
We repeat the above process for equations \ref{eqn:2} and \ref{eqn:3}. Solving for $a_{-j}(R)$ in (\ref{eqn:2},$i,-j$) gives us
\begin{align}
    a_{-j}(R) &= \frac{d_i(T)b_{-j}(S)c_i(R) + a_{-j}(T)d_i(S)b_{j}(R) - c_{-i}(T)b_{j}(S)d_i(R)}{b_{-j}(T)d_i(S)}\label{eqn:16}
\end{align}
and solving for $a_{j}(R)$ in (\ref{eqn:3},$i,j$) gives us
\begin{align}
    a_{j}(R) &= \frac{d_i(T)b_{j}(S)b_{j}(R) + a_{j}(T)d_i(S)c_{-i}(R) - a_{-j}(T)c_{-i}(S)d_i(R)}{d_i(T)a_{j}(S)}.\label{eqn:17}
\end{align}
Substituting (\ref{eqn:16},$i,j$) and (\ref{eqn:17},$i,j$) into (\ref{eqn:1},$i,-j$) yields
\begin{align*}
    &a_{-j}(T)a_{-j}(S)d_i(R) \\&+d_i(T)c_i(S)\left(\frac{d_i(T)b_{j}(S)b_{j}(R) + a_{j}(T)d_i(S)c_{-i}(R) - a_{-j}(T)c_{-i}(S)d_i(R)}{d_i(T)a_{j}(S)}\right)\nonumber\\
    &=c_i(T)d_i(S)\left(\frac{d_i(T)b_{-j}(S)c_i(R) + a_{-j}(T)d_i(S)b_{j}(R) - c_{-i}(T)b_{j}(S)d_i(R)}{b_{-j}(T)d_i(S)}\right)\nonumber\\
    &+b_{j}(T)b_{j}(S)d_i(R).
\end{align*}
Rearranging this equation gives us
\begin{align}
   &b_j(R)[c_i(S)d_i(T)b_j(S)b_{-j}(T) - c_i(T)a_{-j}(T)d_i(S)a_j(S)] = c_i(R)[c_i(T)d_i(T)b_{-j}(S)a_j(S)]\nonumber\\
   &-c_{-i}(R)[c_i(S)a_j(T)d_i(S)b_{-j}(T)]+d_i(R)[a_{-j}(T)b_{-j}(T)[c_i(S)c_{-i}(S) - a_j(S)a_{-j}(S)]\nonumber\\&+ a_j(S)b_j(S)[b_j(T)b_{-j}(T) - c_i(T)c_{-i}(T)]].\label{eqn:18}
\end{align}
Substituting (\ref{eqn:14},$i,j$) into (\ref{eqn:18},$i,j$) yields
\begin{align}
       &b_j(R)[c_i(S)d_i(T)b_j(S)b_{-j}(T) - c_i(T)a_{-j}(T)d_i(S)a_j(S)]\nonumber\\&=c_i(R)\left[\frac{c_{-i}(T)c_i(T)d_i(T)b_{-j}(S)a_j(S)-c_i(T)c_{-i}(S)a_j(T)d_i(S)b_{-j}(T)}{c_{-i}(T)}\right]\nonumber\\&+d_i(R)[a_{-j}(T)b_{-j}(T)[c_i(S)c_{-i}(S) - a_j(S)a_{-j}(S)]\nonumber\\&+ a_j(S)b_j(S)[b_j(T)b_{-j}(T) - c_i(T)c_{-i}(T)]].\label{eqn:19}
\end{align}
Since the left hand sides of (\ref{eqn:15},$i,j$) and (\ref{eqn:19},$i,j$) are identical, setting their right hand sides equal to each other gives us
\begin{align}
    &\frac{c_i(R)c_i(T)}{c_{-i}(T)}[c_{-i}(T)d_i(T)[b_{-j}(S)a_j(S)+b_j(S)a_{-j}(S)]\nonumber\\
    &- c_{-i}(S)d_i(S)[a_j(T)b_{-j}(T)+b_j(T)a_{-j}(T)]]\nonumber\\
    &= d_i(R)[a_{-j}(T)b_{-j}(T)F(S)-a_j(S)b_j(S)F(T)] \label{eqn:20}
\end{align}
where
\[F(\psi) = a_1(\psi)a_{-1}(\psi) + b_1(\psi)b_{-1}(\psi) - c_1(\psi)c_{-1}(\psi) - d_1(\psi)d_{-1}(\psi). \]
Now we repeat everything above, except with equation (\ref{eqn:6}) in place of equation (\ref{eqn:1}), and the variables $b$ instead of $a$. We can rewrite equation (\ref{eqn:2},$i,j$) as
\begin{align}
    d_i(S)b_{-j}(R) &= \frac{b_{j}(T)d_i(S)a_{j}(R) + c_{-i}(T)b_{-j}(S)d_i(R) - d_i(T)b_{j}(S)c_i(R)}{a_{j}(T)}\label{eqn:21}
\end{align}
and equation (\ref{eqn:5},$-i,j$) as
\begin{align}
    c_{-i}(T)b_{j}(R) = \frac{c_{-i}(T)b_{j}(S)a_{j}(R) + b_{-j}(T)d_i(S)d_{-i}(R) - b_{j}(T)c_{-i}(S)c_{i}(R)}{a_{j}(S)}.\label{eqn:22}
\end{align}
Substituting equations (\ref{eqn:21},$i,j$) and (\ref{eqn:22},$i,j$) into (\ref{eqn:6},$i,-j$) yields
\begin{align}
    &a_{j}(R)[c_i(S)c_{-i}(T)b_{j}(S)a_{j}(T) - d_{-i}(T)b_{j}(T)d_i(S)a_{j}(S)]\nonumber\\
    &=c_i(R)[a_{j}(T)b_{j}(T)[c_i(S)c_{-i}(S) - a_{-j}(S)a_{j}(S)] +a_{j}(S)b_{j}(S)[a_{-j}(T)a_{j}(T) - d_i(T)d_{-i}(T)]]\nonumber\\
    &+ d_i(R)[d_{-i}(T)c_{-i}(T)b_{-j}(S)a_{j}(S)] - d_{-i}(R)[c_i(S)b_{-j}(T)d_i(S)a_{j}(T)].\label{eqn:23}
\end{align}
Equation (\ref{eqn:8},$i$) implies that
\begin{align}
    d_{-i}(R) = d_i(R)\frac{d_{-i}(T)c_{-i}(S)}{d_i(T)c_i(S)}.\label{eqn:24}
\end{align}
Substituting this expression into (\ref{eqn:23},$i,j$) gives us
\begin{align}
    &a_{j}(R)[c_i(S)c_{-i}(T)b_{j}(S)a_{j}(T) - d_{-i}(T)b_{j}(T)d_i(S)a_{j}(S)]\nonumber\\
    &=c_i(R)[a_{j}(T)b_{j}(T)[c_i(S)c_{-i}(S) - a_{-j}(S)a_{j}(S)] +a_{j}(S)b_{j}(S)[a_{-j}(T)a_{j}(T) - d_i(T)d_{-i}(T)]]\nonumber\\
    &+ \frac{d_i(R)}{d_i(T)}[d_i(T)d_{-i}(T)c_{-i}(T)b_{-j}(S)a_j(S)-d_{-i}(T)c_{-i}(S)b_{-j}(T)d_i(S)a_j(T)].\label{eqn:25}
\end{align}
We can rewrite equation (\ref{eqn:3},$-i,j$) as
\begin{align}
    d_{-i}(T)b_j(R) = \frac{d_{-i}(T)a_j(S)a_j(R) + a_{-j}(T)c_{i}(S)d_{-i}(R) - a_j(T)d_{-i}(S)c_{i}(R)}{b_j(S)}\label{eqn:26}
\end{align}
and equation (\ref{eqn:4},$i,j$) as
\begin{align}
    c_i(S)b_{-j}(R) = \frac{a_j(T)c_i(S)a_j(R) + d_{-i}(T)a_{-j}(S)d_i(R) - c_i(T)a_j(S)c_i(R)}{b_j(T)}\label{eqn:27}.
\end{align}
Substituting equations (\ref{eqn:26},$i,j$) and (\ref{eqn:27},$i,j$) into (\ref{eqn:6},$i,j$) gives us
\begin{align}
    &a_j(R)[c_{-i}(T)a_j(T)c_i(S)b_j(S) - d_i(S)d_{-i}(T)a_j(S)b_j(T)]\nonumber\\
    &= c_i(R)[a_j(S)b_j(S)[c_i(T)c_{-i}(T) - b_j(T)b_{-j}(T)] + a_j(T)b_j(T)[b_j(S)b_{-j}(S)-d_i(S)d_{-i}(S)]]\nonumber\\
    &+ d_{-i}(R)[d_i(S)a_{-j}(T)c_i(S)b_j(T)] - d_i(R)[c_{-i}(T)d_{-i}(T)a_{-j}(S)b_j(S)].\label{eqn:28}
\end{align}
Substituting equation (\ref{eqn:24},$i,j$) into (\ref{eqn:28},$i,j$) gives us
\begin{align}
    &a_j(R)[c_{-i}(T)a_j(T)c_i(S)b_j(S) - d_i(S)d_{-i}(T)a_j(S)b_j(T)]\nonumber\\
    &= c_i(R)[a_j(S)b_j(S)[c_i(T)c_{-i}(T) - b_j(T)b_{-j}(T)] + a_j(T)b_j(T)[b_j(S)b_{-j}(S)-d_i(S)d_{-i}(S)]]\nonumber\\
    &+ \frac{d_i(R)}{d_i(T)}[d_{-i}(T)c_{-i}(S)d_i(S)a_{-j}(T)b_j(T) - d_i(T)c_{-i}(T)d_{-i}(T)a_{-j}(S)b_j(S)].\label{eqn:29}
\end{align}
Since equations (\ref{eqn:25},$i,j$) and (\ref{eqn:29},$i,j$) have the same left hand side, we have
\begin{align}
    &c_i(R)[a_j(T)b_j(T)F(S) - a_j(S)b_j(S)F(T)]\nonumber\\
    &= \frac{d_i(R)d_{-i}(T)}{d_i(T)}[c_{-i}(T)d_i(T)[b_{-j}(S)a_j(S) + a_{-j}(S)b_j(S)] - c_{-i}(S)d_i(S)[b_{-j}(T)a_j(T) + a_{-j}(T)b_j(T)]].\label{eqn:30}
\end{align}
Let
\begin{align*}
    &G_i(S,T) = [ c_{-i}(T)d_i(T)[b_{-1}(S)a_1(S)+ a_{-1}(S)b_1(S)]- c_{-i}(S)d_i(S)[b_{-1}(T)a_1(T)+a_{-1}(T)b_1(T)]].
\end{align*}
Our calculations above allow us to prove the following condition.
\begin{theorem}
Let $S, T \in \End(V\otimes V)$. Suppose that there exists $R \in \text{End}(V\otimes V)$ such that $c_{-1}(R),c_1(R),d_{-1}(R),d_1(R)$ are nonzero and $[[R,S,T]] = 0$. Then, we have \begin{align}
    a_1(T)b_1(T)F(S) &= a_{-1}(T)b_{-1}(T)F(S)\label{condition1}\\
    a_1(S)b_1(S)F(T) &= a_{-1}(S)b_{-1}(S)F(T)\label{condition2}\\
    \frac{c_i(T)d_{-i}(T)}{c_{-i}(T)d_i(T)}G_i(S,T)^2 &= [a_1(T)b_1(T)F(S) - a_1(S)b_1(S)F(T)]^2\label{condition3}\\
    \frac{c_1(T)c_{-1}(S)}{c_{-1}(T)c_1(S)} &= \frac{d_1(T)d_{-1}(S)}{d_{-1}(T)d_1(S)}.\label{condition4}
\end{align}
\end{theorem}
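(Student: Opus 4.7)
The plan is to read off all four conditions from the two master identities already in hand, namely equation (\ref{eqn:20}) and equation (\ref{eqn:30}), together with the ``diagonal'' relations (\ref{eqn:7})--(\ref{eqn:10}). The essential observation is that the left-hand sides of (\ref{eqn:20}) and (\ref{eqn:30}) depend on $j$ only through the bracket $[b_{-j}(S)a_j(S)+b_j(S)a_{-j}(S)]$ (and its analogue for $T$), which is invariant under $j\mapsto -j$. Hence each of these equations, read as a function of $j\in\{-1,1\}$, forces its $j$-independent left side to kill the antisymmetric part of its right side.

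First I would dispose of condition (\ref{condition4}): rearranging (\ref{eqn:9}) yields $d_{-1}(R)/d_1(R) = c_{-1}(T)d_{-1}(S)/(c_1(T)d_1(S))$, while equation (\ref{eqn:24}) with $i=1$ gives $d_{-1}(R)/d_1(R) = d_{-1}(T)c_{-1}(S)/(d_1(T)c_1(S))$. Equating the two expressions and cross-multiplying gives exactly (\ref{condition4}). (As a sanity check, one can verify that combining (\ref{eqn:14}) with (\ref{eqn:10}) yields the same identity.) This part requires only the nonvanishing of $d_{\pm1}(R)$ and $c_{\pm 1}(S)$, $c_{\pm 1}(T)$.

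Next, for conditions (\ref{condition1}) and (\ref{condition2}), I would write (\ref{eqn:20}) and (\ref{eqn:30}) for the two values $j=1$ and $j=-1$. Since their left sides are $j$-independent (noting that $G_i(S,T)$ is defined symmetrically in $j\leftrightarrow -j$), the right sides must agree in $j$. From (\ref{eqn:20}), after canceling $d_i(R)\neq 0$, this produces
\[
[a_{-1}(T)b_{-1}(T)-a_1(T)b_1(T)]F(S) \;=\; [a_1(S)b_1(S)-a_{-1}(S)b_{-1}(S)]F(T),
\]
and from (\ref{eqn:30}), after canceling $c_i(R)\neq 0$, it produces the same equation with the overall sign of the left-hand side reversed. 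Adding and subtracting these two identities yields conditions (\ref{condition1}) and (\ref{condition2}) at once.

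Finally, for condition (\ref{condition3}), I would go back to (\ref{eqn:20}) and (\ref{eqn:30}) with $j=1$ fixed. Using condition (\ref{condition1}) just established, the bracket $a_{-1}(T)b_{-1}(T)F(S)-a_1(S)b_1(S)F(T)$ appearing on the right of (\ref{eqn:20}) becomes $a_1(T)b_1(T)F(S)-a_1(S)b_1(S)F(T)$, which is the same bracket appearing on the left of (\ref{eqn:30}). Let $X$ denote this common bracket. Then (\ref{eqn:20}) and (\ref{eqn:30}) read
\[
c_i(R)\frac{c_i(T)}{c_{-i}(T)}G_i(S,T) \;=\; d_i(R)X, \qquad c_i(R)X \;=\; d_i(R)\frac{d_{-i}(T)}{d_i(T)}G_i(S,T).
\]
Solving each for the ratio $d_i(R)/c_i(R)$ and equating eliminates $R$, and cross-multiplying the resulting identity gives $X^2\, c_{-i}(T)d_i(T) = G_i(S,T)^2\, c_i(T)d_{-i}(T)$, which is exactly (\ref{condition3}).

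The step I expect to be the main pitfall is the $j$-symmetrization argument for (\ref{condition1}) and (\ref{condition2}): one has to be careful that the bracket $b_{-j}(S)a_j(S)+b_j(S)a_{-j}(S)$ really is symmetric under $j\leftrightarrow -j$ so that the two choices of $j$ in (\ref{eqn:20}) genuinely share the same left-hand side, and similarly for (\ref{eqn:30}). Once this symmetry is recognized, the remainder of the argument is straightforward algebra.
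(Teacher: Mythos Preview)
Your proof is correct and is essentially the paper's argument in different clothing: the paper packages (\ref{eqn:20}) and (\ref{eqn:30}) for $j=\pm1$ as a $4\times 2$ homogeneous system in $(c_i(R),d_i(R))$ and reads off the vanishing $2\times2$ minors (with a short case split on $G_i(S,T)=0$), whereas you obtain the very same three relations by $j$-symmetrizing the two identities and then cross-multiplying. For (\ref{condition4}) the paper pairs (\ref{eqn:7}) with (\ref{eqn:10}) instead of your (\ref{eqn:9}) with (\ref{eqn:24}), but either pair yields the same ratio identity.

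One small technical point in your derivation of (\ref{condition3}): passing through the ratio $d_i(R)/c_i(R)$ tacitly assumes $X\neq 0$ and $G_i(S,T)\neq 0$. The paper covers the degenerate case by a separate $G_i(S,T)=0$ branch; in your framework you can avoid any case split by multiplying the two displayed relations directly (i.e.\ forming $A\cdot D=B\cdot C$), which gives $c_i(R)d_i(R)\bigl[\tfrac{c_i(T)d_{-i}(T)}{c_{-i}(T)d_i(T)}G_i(S,T)^2-X^2\bigr]=0$ and hence (\ref{condition3}) without division.
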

\begin{proof}
We write the system of equations given by $(\ref{eqn:20})$ and $(\ref{eqn:30})$ as the matrix equation
\[\begin{pmatrix}
\frac{c_i(T)}{c_{-i}(T)}G_i(S,T) && - \alpha_1(S,T)\\
\frac{c_i(T)}{c_{-i}(T)}G_i(S,T) && -\alpha_2(S,T)\\
\beta_1(S,T) && -\frac{d_{-i}(T)}{d_i(T)}G_i(S,T)\\
\beta_2(S,T) && -\frac{d_{-i}(T)}{d_i(T)}G_i(S,T)
\end{pmatrix}\begin{pmatrix}
c_i(R)\\d_i(R)
\end{pmatrix} = 0,
\]
where
\[\alpha_j(S,T) = a_j(T)b_j(T)F(S) - a_{-j}(S)b_{-j}(S)F(T) \]
and
\[\beta_j(S,T) = a_j(T)b_j(T)F(S) - a_j(S)b_j(S)F(T).\]
First we observe that if $G_i(S,T) = 0$, then we must have $\alpha_1(S,T) = \alpha_2(S,T) = \beta_1(S,T) = \beta_2(S,T) = 0$ in order for both $c_i(R)$ and $d_i(R)$ to be nonzero. It is easy to see that this implies equations (\ref{condition1}), (\ref{condition2}), (\ref{condition3}). Now suppose that $G_i(S,T) \neq 0$. A necessary condition for one of $c_{-1}(R),c_1(R),d_{-1}(R),$ and $d_1(R)$ to be nonzero is for all 6 of the 2 by 2 minors of the above matrix to vanish for $i = 1,-1$. The vanishing of the minors of the rows $(1,2),(3,4),(1,3)$ can be rewritten as
\begin{align}
    \alpha_1(S,T) &= \alpha_2(S,T)\label{eqn:35}\\
    \beta_1(S,T) &= \beta_2(S,T)\label{eqn:36}\\
    \frac{c_i(T)d_{-i}(T)}{c_{-i}(T)d_i(T)}G_i(S,T)^2 &= \alpha_1(S,T)\beta_1(S,T).\label{eqn:37}
\end{align}
By combining equations (\ref{eqn:35}) and (\ref{eqn:36}), we obtain conditions (\ref{condition1}) and (\ref{condition2}). Then, using equations (\ref{eqn:37}) and (\ref{condition1}),we get
\begin{align*}
    \frac{c_i(T)d_{-i}(T)}{c_{-i}(T)d_i(T)}G_i(S,T)^2 &= [a_1(T)b_1(T)F(S) - a_2(S)b_2(S)F(T)][a_1(T)b_1(T)F(S) - a_1(S)b_1(S)F(T)]\\
    &= [a_1(T)b_1(T)F(S) - a_1(S)b_1(S)F(T)][a_1(T)b_1(T)F(S) - a_1(S)b_1(S)F(T)],
\end{align*}
which shows that (\ref{condition3}) holds. So in all cases, conditions (\ref{condition1}), (\ref{condition2}), and (\ref{condition3}) hold.

To get the last equation, we combine equations (\ref{eqn:7},$i$) and (\ref{eqn:10},$i$).
\end{proof}
\section{Future Directions}
A main theme of this paper was interpreting properties of lattice models in terms of differential forms. We have only scratched the surface of the problems this technique can be applied to. In particular, we restricted our attention to lattice models without weights assigned at each vertex. A first step toward understanding weighted partition functions in this viewpoint would be to find an interpretation of the Yang-Baxter equation using differential forms.

Another potential area of further investigation is to describe interesting families of weights for which the eight-vertex Yang-Baxter equation holds. This has been explored in the case $c_1 = c_{-1}$ and $d_1 = d_{-1}$ by Cuerno et al \cite{cuerno}. It is natural to ask if the weights can be generalized to families where $c_1 \neq c_{-1}$ or $d_1 \neq d_{-1}$. 
\section{Acknowledgements}
This research was conducted at the 2020 University of Minnesota Twin Cities REU with the support of the NSF RTG grant DMS-1745638. I would like to thank Ben Brubaker and Claire Frechette for their mentorship and support.
\bibliographystyle{amsplain}
\bibliography{references}
\end{document}